\begin{document}
\numberwithin{equation}{section}

\def\1#1{\overline{#1}}
\def\2#1{\widetilde{#1}}
\def\3#1{\widehat{#1}}
\def\4#1{\mathbb{#1}}
\def\5#1{\frak{#1}}
\def\6#1{{\mathcal{#1}}}

\def\C{{\4C}}
\def\R{{\4R}}
\def\N{{\4N}}
\def\Z{{\4Z}}

\title[One-resonant biholomorphisms]{Dynamics of one-resonant biholomorphisms}
\author[F. Bracci]{Filippo Bracci}
\address{F. Bracci: Dipartimento Di Matematica, Universit\`{a} Di Roma \textquotedblleft Tor
Vergata\textquotedblright, Via Della Ricerca Scientifica 1,
00133, Roma, Italy. } \email{fbracci@mat.uniroma2.it}
\author[D. Zaitsev]{Dmitri Zaitsev*}
\address{D. Zaitsev: School of Mathematics, Trinity College Dublin, Dublin 2, Ireland}
\email{zaitsev@maths.tcd.ie}
\thanks{$^{*}$Supported in part by the Science Foundation Ireland grant 06/RFP/MAT018.}


\def\Label#1{\label{#1}{\bf (#1)}~}


\def\cn{{\C^n}}
\def\cnn{{\C^{n'}}}
\def\ocn{\2{\C^n}}
\def\ocnn{\2{\C^{n'}}}


\def\dist{{\rm dist}}
\def\const{{\rm const}}
\def\rk{{\rm rank\,}}
\def\id{{\sf id}}
\def\aut{{\sf aut}}
\def\Aut{{\sf Aut}}
\def\CR{{\rm CR}}
\def\GL{{\sf GL}}
\def\Re{{\sf Re}\,}
\def\Im{{\sf Im}\,}
\def\span{\text{\rm span}}

\def\codim{{\rm codim}}
\def\crd{\dim_{{\rm CR}}}
\def\crc{{\rm codim_{CR}}}

\def\phi{\varphi}
\def\eps{\varepsilon}
\def\d{\partial}
\def\a{\alpha}
\def\b{\beta}
\def\g{\gamma}
\def\G{\Gamma}
\def\D{\Delta}
\def\Om{\Omega}
\def\k{\kappa}
\def\l{\lambda}
\def\L{\Lambda}
\def\z{{\bar z}}
\def\w{{\bar w}}
\def\t{\tau}
\def\th{\theta}
\def\ta{\tilde{\alpha}}
\def\sideremark#1{\ifvmode\leavevmode\fi\vadjust{
\vbox to0pt{\hbox to 0pt{\hskip\hsize\hskip1em
\vbox{\hsize1.5cm\tiny\raggedright\pretolerance10000
\noindent #1\hfill}\hss}\vbox to8pt{\vfil}\vss}}}

\def\Dif{{\sf Diff}(\C^n;0)}

\emergencystretch15pt \frenchspacing

\newtheorem{theorem}{Theorem}[section]
\newtheorem{lemma}[theorem]{Lemma}
\newtheorem{proposition}[theorem]{Proposition}
\newtheorem{corollary}[theorem]{Corollary}

\theoremstyle{definition}
\newtheorem{definition}[theorem]{Definition}
\newtheorem{example}[theorem]{Example}

\theoremstyle{remark}
\newtheorem{remark}[theorem]{Remark}
\numberwithin{equation}{section}

\begin{abstract} Our first main result is a construction of a simple
formal normal form for holomorphic
diffeomorphisms in $\C^n$ whose
differentials have one-dimensional family of resonances in
the first $m$ eigenvalues, $m\leq n$ (but more resonances are
allowed for other eigenvalues).
Next, we provide invariants and give conditions
for the existence of basins of attraction.
Finally, we give applications and examples
demonstrating the sharpness of our conditions.
\end{abstract}

\maketitle

\section{Introduction}

Let $F$ be a germ of holomorphic diffeomorphism of $\C^n$
fixing the origin $0$ with diagonalizable differential. The
dynamical behavior of the sequence of iterates $\{F^{\circ
q}\}_{q\in \N}$ of $F$ in a neighborhood of $0$ is depicted at
the first order by the dynamics of its differential $dF_0$. In
fact, depending on the eigenvalues $\lambda_1,\ldots,
\lambda_n$ of $dF_0$, in some cases both dynamics are the same.

In the hyperbolic case (namely when none of the eigenvalues is
of modulus $1$) the map is topologically conjugated to
its differential (by the Hartman-Grobman  theorem \cite{Har},
\cite{G1}, \cite{G2}) and the dynamics is clear. Moreover, if
the eigenvalues have either all modulus strictly smaller than one or
all strictly greater than one, then the origin is an attracting or respectively
repelling fixed point for an open neighborhood of $0$. Also, by
the stable/unstable manifold theorem, there exists a
holomorphic (germ of) manifold invariant under $F$ and tangent
to the sum of the eigenspaces of those $\lambda_j$'s such that
$|\lambda_j|<1$ ({\sl resp.} $|\lambda_j|>1$) which is
attracted to ({\sl resp.} repelled from)  $0$. However,
already in  case when all eigenvalues have modulus different
from $1$, holomorphic linearization is not always possible due
to the presence of resonances among the eigenvalues (see, for
instance, \cite[Chapter IV]{Ar}).

The case where some eigenvalue has modulus $1$ is the most
``chaotic'' and interesting, since it presents a plethora of
possible scenarios. For instance, if those eigenvalues of
modulus $1$ are not roots of unity and satisfy some Bruno-type
conditions, then there exist Siegel-type invariant submanifolds
(see \cite{Po}, \cite{Y}) on which the map is (holomorphically)
linearizable. If the map is tangent to the identity,
it has been proved by \'Ecalle \cite{Ec} and Hakim \cite{Ha}
that generically there exist ``petals'', also called ``parabolic
curves'', namely, one-dimensional $F$-invariant analytic discs
having the origin in their boundary and on which the dynamics
is of parabolic type. Later, Abate \cite{Ab} (see also \cite{ABT}) proved that such
petals always exist in dimension two.

On the other hand, Hakim~\cite{H} (based on the previous work
by Fatou \cite{Fa} and Ueda \cite{Ue1}, \cite{Ue2} in $\C^{2}$,
see also Takano \cite{Taka})  studied the so-called {\em
semi-attractive} case, with one eigenvalue equal to $1$ and the
rest of eigenvalues having modulus less than $1$. She proved
that either there exists a curve of fixed points or there exist
attracting open petals. Such a result has been later
generalized by Rivi \cite{Ri}.

The {\em quasi-parabolic} case of a germ in $\C^2$, i.e.\ having one
eigenvalue $1$ and the other of modulus equal to one, but not a
root of unity has been
studied in \cite{B-M} and it has been proved that, under a certain
generic hypothesis called ``dynamical separation'', there
exist petals tangent to the eigenspace of $1$. Such a result
has been generalized to higher dimension by Rong \cite{R1},
\cite{R2}. We refer the reader to the survey papers \cite{Ab1}
and \cite{Br} for a more accurate review of existing results.

In case of diffeomorphisms with {\em unipotent} linear part, it
was shown by Takens \cite{Tak} (see also \cite[Chapter 1]{IY})
that such diffeomorphism can be embedded in the flow of a
formal vector field. Therefore, in this case the dynamics of
the diffeomorphism, at least at the formal level, is related to
that of a (formal) associated vector field. For instance, using
the Camacho-Sad theorem on the existence of separatrices for
vector fields \cite{CS}, Brochero, Cano and Hernanz \cite{BCL}
gave another proof of Abate's theorem. On the other hand, when
the linear part of the diffeomorphism is not unipotent, the
authors are not aware of any general result about embedding
such a diffeomorphism into the flow of a formal vector field.
In fact, one encounteres somewhat unexpected differences
between the dynamics of diffeomorphisms and that of vector
fields, see Raissy \cite{Ra}.

The aim of the present paper is the study of normal forms and
the dynamics of germs of holomorphic diffeomorphisms having a
one-dimensional family of resonances among only certain
eigenvalues (that we call here {\em partially one-resonant}
diffeomorphisms). It should be mentioned here that (fully)
one-resonant vector fields have been studied by Stolovitch in
\cite{St}, where he also obtained a normal form for vector
fields up to multiplication by a unit. In case of
diffeomorphisms considered here, there is no natural analogue
of multiplying by a unit and thus we are lead to seek a normal
form for the original diffeomorphism only under conjugations.

More in details, let $\lambda_1,\ldots, \lambda_n$ be the eigenvalues
of the linear part of a biholomorphic diffeomorphism germ $F$ at $0$.
We say that $F$ is
{\sl one-resonant with respect to the first $m$ eigenvalues
$\{\lambda_1,\ldots, \lambda_m\}$} ($1\le m\leq n$) (or {\sl partially one-resonant}) if there
exists a fixed multi-index $\alpha=(\alpha_1,\ldots, \alpha_m,
0,\ldots, 0)\ne0\in\N^n$  such for $s\le m$,
the resonances $\l_{s}=\prod_{j=1}^n\lambda_j^{\b_j}$
 are precisely of the form $\l_{s}=\l_{s}\prod_{j=1}^m\lambda_j^{k\a_j}$,
where $k\ge1\in\N$ is arbitrary.
We stress out that, since arbitrary resonances are allowed for $s>m$, such a
condition is much weaker (see Example~\ref{mn}) than the one-resonance condition
normally found in the literature corresponding here to the case
$m=n$, see e.g.\ \cite{I,St}. The main advantage of the new
notion of partial one-resonance is that it can be applied to
the subset of all eigenvalues of modulus equal to $1$ that is
natural to  treat differently from the rest of the eigenvalues.

In case of partial one-resonance, the classical
Poincar\'e-Dulac theory implies that, whenever $F$ is not
formally linearizable in the first $m$ components, $F$ is
formally conjugated to a map whose first $m$ components are of
the form $\lambda_j z_j +a_j z^{\alpha k}z_j + R_j(z)$,
$j=1,\ldots, m$, where, the number $k\in \N$ is an invariant,
called {\sl the order of $F$ with respect to
$\{\lambda_1,\ldots, \lambda_m\}$}, the vector $(a_1,\ldots,
a_m)\neq 0$ is invariant up to a scalar multiple and the
$R_j$'s contain  only resonant terms of higher degree. The
number
\[
\Lambda=\L(F):=\sum_{j=1}^m \frac{{a_j} \alpha_j}{{\lambda_j}}
\]
is an invariant up to a scalar multiple, and the map $F$ is
said to be {\sl non-degenerate} provided $\Lambda\neq 0$.

We show that (partially) one-resonant non-degenerate diffeomorphisms have a simple
formal {\em normal form} (see Theorem \ref{normal}) in which the
first $m$ components are of the form
\begin{equation*}
\lambda_j z_j +a_j z^{k\a}z_j +\mu {\alpha_j}{\1\l_{j}}^{-1}z^{2k\a} z_j,
\quad j=1,\ldots, m.
\end{equation*}
Although none of the eigenvalues $\lambda_j$, $j=1,\ldots, m$,
might be roots of unity, such a normal form is the exact
analogue of the formal normal form for parabolic germs in $\C$.
In fact, a one-resonant germ acts as a parabolic germ on the
space of leaves of the formal invariant foliation $\{z^\a=\hbox{const}\}$
and that is  the reason for this parabolic-like behavior.

Let $F$ be a one-resonant non-degenerate diffeomorphism with
respect to the eigenvalues $\{\lambda_1,\ldots, \lambda_m\}$.
We say that $F$ is {\sl parabolically attracting} with respect
to $\{\lambda_1,\ldots, \lambda_m\}$ if
\begin{equation*}
|\l_{j}|=1, \quad
\Re\left( {a_j}{\lambda_j}^{{-1}} {\Lambda}^{-1}\right)>0, \quad j=1,\ldots, m.
\end{equation*}
Again, such a condition is invariant and its inequality part is vacuous in
dimension $1$ or whenever $m=1$ and $|\lambda_1|=1$ since in
that case $\a=(\a_{1},0,\ldots,0)$ with $\a_{1}>0$. Our main
result is the following:

\begin{theorem}\label{main}
Let $F$ be a holomorphic diffeomorphism germ at $0$ that is
 one-resonant, non-degenerate and
parabolically attracting  with respect to
$\{\lambda_1,\ldots,\lambda_m\}$. Suppose that $|\lambda_j|<1$
for $j>m$. Let $k\in \N$ be the order of $F$ with respect to
$\{\lambda_1,\ldots, \lambda_m\}$. Then $F$ has $k$ disjoint
basins of attraction having $0$ on the boundary.
\end{theorem}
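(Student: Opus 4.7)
The plan is to reduce the multidimensional dynamics of $F$ to a one-dimensional parabolic problem via the resonant monomial $u(z) := z^{\alpha} = z_1^{\alpha_1}\cdots z_m^{\alpha_m}$. After applying the formal normal form given by Theorem~\ref{normal} and truncating at sufficiently high order, the first $m$ components of $F$ read
$$F_s(z) = \lambda_s z_s + a_s z^{k\alpha}z_s + \mu\alpha_s\overline{\lambda_s}^{-1}z^{2k\alpha}z_s + (\text{higher order}), \qquad s = 1,\ldots,m,$$
with the remaining components hyperbolically contracting to leading order. A direct computation of $u\circ F = \prod_{s\le m} F_s^{\alpha_s}$, using the one-resonance identity $\prod_{s\le m}\lambda_s^{\alpha_s} = c$ with $c^k = 1$, gives
$$u\circ F = c\,u\bigl(1 + \Lambda u^k + \mu_\ast u^{2k} + O(u^{3k})\bigr),$$
where $\Lambda = \sum_{s\le m}\alpha_s a_s/\lambda_s$ is the invariant of the hypothesis. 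Iterating $k$ times reduces the action on $u$ to the classical parabolic germ $u\mapsto u + k\Lambda u^{k+1} + O(u^{2k+1})$, whose Leau--Fatou theory provides $k$ pairwise disjoint open attracting petals $P_1,\ldots,P_k\subset\C$ centered on the $k$ rays along which $\Lambda u^k$ is negative real.

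For each $\ell = 1,\ldots, k$ I would take as candidate basin
$$\Omega_\ell := \bigl\{z\,:\, u(z)\in P_\ell,\ |z_j|<\delta\text{ for all }j\bigr\}$$
with $\delta$ small, and prove that $F(\Omega_\ell)\subset \Omega_\ell$ and $F^{\circ n}\to 0$ on $\Omega_\ell$. The control splits into three parts. For the parabolic coordinate $u$, the one-variable theory gives $u(F^{\circ n}(z))\to 0$ inside $P_\ell$ with polynomial rate $|u|\sim n^{-1/k}$. For $j > m$, $|\lambda_j|<1$ yields geometric contraction of $z_j$ as soon as the orbit stays small. For $s\le m$ the normal form gives
$$|F_s(z)|^2 = |z_s|^2\bigl(1 + 2\Re\bigl((a_s/\lambda_s)u^k\bigr) + O(u^{2k})\bigr),$$
and since on $P_\ell$ the leading term $\Lambda u^k$ is approximately a negative real, the parabolic-attraction hypothesis $\Re(a_s/(\lambda_s\Lambda))>0$ makes the factor strictly less than one. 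Summing these infinitesimal contractions along the orbit forces $|z_s(F^{\circ n}(z))|\to 0$. The $\Omega_\ell$ are pairwise disjoint because the $P_\ell$ are, and $0$ lies on the boundary of each $\Omega_\ell$ because $0\in\partial P_\ell$.

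The main obstacle is that the parabolic decay of $u$ is only polynomial in $n$, so all higher-order remainders---those from truncating the normal form, the $O(u^{2k})$ in the expansion of $|F_s|^2$, and the tangential imaginary part of $\Lambda u^k$ picked up inside the petal---must be dominated by the leading contracting term once they are summed along an entire orbit. Closing this coupled system of estimates will require choosing $\Omega_\ell$ not as a simple polydisc but as a Fatou-petal-shaped region in the $u$-variable matched to an appropriately shrinking polydisc (or cone) in the transverse coordinates, so that $F$-invariance can be verified by an induction on $n$ in which the parabolic rate in $u$ and the decay of the $|z_s|$ are bootstrapped simultaneously.
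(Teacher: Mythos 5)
Your plan follows the same route as the paper: reduce to the formal normal form of Theorem~\ref{normal}, project via $u=z^{\alpha}$ onto a one-dimensional parabolic germ, take Leau--Fatou petals $P_\ell$ in $u$, and build basins as fibered regions over the $P_\ell$ with a shrinking transverse cross-section controlled by a bootstrap. That is exactly what the authors do, so the skeleton is right.

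Two points, one small and one substantive. The small one: the one-resonance relation gives $\prod_{s\le m}\lambda_s^{\alpha_s}=1$ \emph{exactly}, not merely that $c:=\prod_{s\le m}\lambda_s^{\alpha_s}$ is some $k$-th root of unity. This follows directly from the definition, since $(j,l\alpha+e_j)$ being a resonance for all $l\ge1$ forces $\lambda^{l\alpha}=1$ for all $l$, hence $\lambda^{\alpha}=1$. So $u\circ F = u+\Lambda u^{k+1}+O(u^{k+2})$ at once, and the detour through iterating $F$ $k$ times is unnecessary (and, if $c$ really were a nontrivial root of unity, it would create the additional problem of translating petals of $F^{\circ k}$ back to $F$-invariant sets).

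The substantive point is that your last paragraph identifies the real difficulty and then stops. The fixed polydisc $\{|z_j|<\delta\}$ you start with cannot work, precisely because (as you note) the $u$-coordinate decays only like $n^{-1/k}$; error terms from the truncation of the normal form, though of high order in $z$, are then not summable along the orbit unless the transverse size is also forced to shrink with $|u|$. The paper's resolution is the specific fibered domain
$$
B=\bigl\{z:\ |z_j|<|u|^{\beta}\ (j\le m),\ |(z_{m+1},\ldots,z_n)|<|u|^{\beta},\ u=z^{\alpha}\in S_R(\epsilon)\bigr\},
$$
where $S_R(\epsilon)$ is a narrow sectorial petal and $\beta$ is chosen small ($\beta|\alpha|<1$, $\beta<1/2$) and coupled to the truncation order $l$ via $\beta l>k+1$. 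They then pass to $U=u^{-k}$, show $\tilde\Phi(U,z)=U+1+\nu$ with $|\nu|<\delta$ on $B$, and derive the chain of inequalities (their~(\ref{estimo}), (\ref{plimio}), (\ref{Fp}), (\ref{new-deal})) that simultaneously propagate $U\in H_R(\epsilon)$ and $|z_j|<|u|^{\beta}$ to the next iterate. The sign condition $\Re(a_j\lambda_j^{-1}\Lambda^{-1})>0$ enters exactly where you say it should, via (\ref{estimo}), but must be balanced against the nonlinear terms through the choice $\beta(\delta+1)-c'k<0$. So the missing piece in your proposal is not the direction (which you have correctly divined) but the explicit matching of exponents $\beta$, $l$, $\delta$, $\epsilon$, $R$ that makes the induction close; until that is written down, the claim $F(\Omega_\ell)\subset\Omega_\ell$ is not established.
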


The different basins of attraction for $F$ (that may or may not
be connected) project via the map $z\mapsto u=z^\a$  into
different petals of the germ $u\mapsto
u+\Lambda(F)u^{k+1}+o(|u|^{k+1})$.

Theorem~\ref{main} has many consequences. For instance,
we recover a result of Hakim (see Corollary
\ref{hakim}) since
not formally linearizable semi-attractive  germs are
always one-resonant, non degenerate and parabolically
attracting. Also, we apply our machinery to the case of
quasi-parabolic germs, providing ``fat petals'' in the
quasi-parabolic dynamically separating and attracting cases (see
Subsection \ref{nonsep}). Another area of application of
Theorem \ref{main} concerns elliptic germs which, in dimension
greater than $1$, might present some, maybe unexpected,
parabolic-like behavior, see Subsection~\ref{mixed}.
Finally, we present examples of a one-resonant degenerate
as well as non-degenerate but not parabolically attracting
germs  which have no basins of attraction at $0$,
demonstrating sharpness of the assumptions of Theorem~\ref{main},
see Subsections~\ref{nobase}
and \ref{non-attracting}.

The outline of the paper is as follows. In Section~\ref{2} we
 briefly recall the one-dimensional theory of parabolic germs and define
one-resonant germs in higher dimension. In Section~\ref{3} we
construct a formal normal form for non-degenerate partially
one-resonant germs. In Section \ref{five} we study the dynamics
of normal forms, as a motivation for the subsequent
Section~\ref{prova}, where we give the proof of Theorem
\ref{main}. Finally, in Section \ref{appl} we apply our theory to the
semi-attractive case, quasi-parabolic case, elliptic case and
provide examples of diffeomorphism with no basins of attraction.

\medskip

The first named author wishes to thank Jasmin Raissy for some
helpful conversations. Also, both authors thank the referee for
his/her very useful comments.

\section{One-resonant diffeomorphisms}\label{2}

\subsection{Preliminaries on germs tangent to the identity in
$\C$}\label{preli} (see  e.g.\ \cite{CaGa}). Let
\begin{equation}\label{h}
h(u):=u+Au^{k+1}+O(|u|^{k+2})
\end{equation}
 for some $A\neq 0$ and $k\geq 1$,
be a germ at $0$ of a holomorphic self-mapping of $\C$.

The {\sl attracting directions} $\{v_1,\ldots, v_k\}$ for $h$
are given by the $k$-th roots of
$-\frac{|A|}{A}$.
These are precisely the directions $v$ such that the term $Av^{k+1}$
shows in the direction opposite to $v$.
An {\sl attracting petal} $P$ for $h$ is a simply-connected
domain such that $0\in \partial P$, $h(P)\subseteq P$ and
$\lim_{m\to\infty}h^{\circ m}(z)=0$ for all $z\in P$,
where $h^{\circ m}$ denotes the $m$th iterate of $h$.

We state here (a part of) the Leau-Fatou flower theorem. We
write $a\sim b$ whenever there exists constants $0<c<C$ such
that $ca\le b\le Ca$.

\begin{theorem}[Leau-Fatou]\label{LF}
Let $h(u)$ be as in \eqref{h} and $v$
an attracting direction for $h$ at $0$. Then there exists an attracting
petal $P$ for $h$ (said {\sl centered at $v$}) such that for each $z\in P$
the following hold:
\begin{enumerate}
  \item $h^{\circ m}(z)\ne0$ for all $m$ and $\lim_{m\to\infty}\frac{h^{\circ m}(z)}{|h^{\circ
  m}(z)|}=v$,
  \item $|h^{\circ m}(z)|^k\sim \frac{1}{m}$.
\end{enumerate}
Moreover, the petals centered at the attracting direction $v$
can be chosen to be connected components of the set
\[
\{z\in \C: |Az^k+\delta|<\delta\},
\]
where $0<\delta<<1$.
\end{theorem}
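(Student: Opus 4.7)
The plan is to conjugate $h$ to a near-translation via the Fatou change of coordinate $W := -1/(kAu^{k})$, a holomorphic $k$-to-$1$ map from a punctured neighborhood of $0$ onto a neighborhood of $\infty$. A direct expansion $u_{n+1}^{-k}=u_{n}^{-k}(1+Au_{n}^{k}+O(|u_{n}|^{k+1}))^{-k}=u_{n}^{-k}-kA+O(|u_{n}|)$, followed by division by $-kA$, shows that in this coordinate $h$ takes the form
\[
H(W)=W+1+\psi(W), \qquad \psi(W)=O(|W|^{-1/k}),
\]
holomorphic for $|W|$ large. The whole purpose of the substitution is that $H$ is a near-translation by $+1$, whose dynamics on a right half-plane are transparent.

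Next I fix $R>0$ so large that $|\psi(W)|<1/2$ on $\Pi_{R}:=\{\Re W>R\}$, which forces $\Pi_{R}$ to be $H$-invariant and gives $\Re W_{m+1}\ge \Re W_{m}+1/2$ for every forward orbit $W_{m}:=H^{\circ m}(W_{0})$ with $W_{0}\in\Pi_{R}$. Telescoping $W_{m+1}-W_{m}=1+\psi(W_{m})$ and using $|W_{j}|\ge R+j/2$ yields $W_{m}=W_{0}+m+o(m)$, since the error sum is $O(\log m)$ when $k=1$ and $O(m^{1-1/k})$ when $k\ge 2$. In particular $|W_{m}|\sim m$ and $W_{m}/|W_{m}|\to 1$ as $m\to\infty$.

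I then translate back to the $u$-coordinate. A short calculation (completing the square in $|Au^{k}+\delta|^{2}$) shows that $|Au^{k}+\delta|<\delta$ is equivalent to $\Re(1/(Au^{k}))<-1/(2\delta)$, hence to $\Re W>1/(2k\delta)$. Setting $\delta:=1/(2kR)$, the set $\{|Au^{k}+\delta|<\delta\}$ is therefore exactly the preimage of $\Pi_{R}$ under $u\mapsto W$, and it splits into $k$ disjoint connected components, one per sheet of the covering. Each such component $P$ is $h$-invariant, simply connected, has $0$ on its boundary, and satisfies $h^{\circ m}(z)\to 0$ for every $z\in P$ (since $|W_{m}|\to\infty$), so is an attracting petal in the required sense. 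The relations $u_{m}^{k}=-1/(kAW_{m})$ and $W_{m}/m\to 1$ give $|h^{\circ m}(z)|^{k}=1/(k|A||W_{m}|)\sim 1/m$, proving (2), and also $(u_{m}/|u_{m}|)^{k}=u_{m}^{k}/|u_{m}|^{k}=-|A||W_{m}|/(AW_{m})\to -|A|/A$; continuity of $u_{m}/|u_{m}|$ on the connected petal $P$ then forces it to converge to the unique $k$-th root of $-|A|/A$ lying in $P$, which is by definition one of the attracting directions, proving (1).

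The main technical point is to select $R$ (equivalently $\delta$) large enough (resp.\ small enough) so that the estimates $|\psi(W)|<1/2$ and $|\psi(W)|=O(|W|^{-1/k})$ hold uniformly along the entire forward orbit, not only near $\infty$. This is routine bookkeeping relying on $A\ne 0$ and $k\ge 1$; once it is done, the rest of the argument is a formal consequence of the near-translation structure of $H$.
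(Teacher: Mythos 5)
Your proof is correct and uses the same Fatou-coordinate approach sketched in the remark following the theorem: conjugate $h$ via $W=-1/(kAu^{k})$ to a near-translation $W\mapsto W+1+O(|W|^{-1/k})$, exhibit an $h$-invariant right half-plane, telescope to get $W_m=W_0+m+o(m)$, and read off properties (1) and (2) from $|W_m|\sim m$ and $W_m/|W_m|\to 1$, identifying the components of $\{|Az^k+\delta|<\delta\}$ with the $k$ sheets of the preimage of the half-plane. Incidentally, your error bound $O(|W|^{-1/k})$ is the sharp one for a general $O(|u|^{k+2})$ tail (the paper's remark writes $O(|w|^{-1})$, which is only accurate for $k=1$), but since only $o(1)$ is used this discrepancy affects neither argument.
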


By the property (1), petals centered at different attracting
directions must be disjoint.

\begin{remark} Property (1) of Theorem \ref{LF} is a part of the standard statement
of the Leau-Fatou theorem (see, {\sl e.g.}, \cite{Ab1} or
\cite{Br2}). Property (2) follows from construction of the
so-called Leau-Fatou coordinate. We sketch it briefly here for
the reader convenience. Up to a dilation one can assume
$A=-1/k$ and $v=1$. Let $H:=\{w \in \C: \Re w>0, \, |w|>C\}$
and $\Psi(w):=w^{-1/k}$ for $w\in H$ with the $k$-th root
chosen so that $1^{1/k}=1$. By the Leau-Fatou construction
(see, {\sl e.g.} \cite[pp.19-22]{Br2}) if $C>0$ is sufficiently
large then the set $P:=\Psi(H)$ is $h$-invariant and  the  map
$\phi:=\Psi^{-1}\circ h\circ \Psi\colon H\to H$ satisfies
\begin{equation*}
\phi(w)=w+1+O(|w|^{-1}), \quad w\in H.
\end{equation*}
From here both (1) and (2) follow easily.
\end{remark}

\subsection{Partially one-resonant germs}

Let $\Dif$ denote the space of germs of holomorphic
diffeomorphisms of $\C^n$ fixing $0$. We shall adopt the
notation $\N=\{0,1,\ldots\}$. Given $\{\lambda_1,\ldots,
\lambda_n\}$ a set of complex numbers, recall that a {\em
resonance} is a pair $(j,l)$, where $j\in\{1,\ldots, n\}$ and
$l=(l_1,\ldots, l_n)\in\N^n$ is a multi-index with $|l|\ge2$
such that $\lambda_j=\lambda^l$ (where
$\lambda^l:=\lambda_1^{l_1}\cdots \lambda_n^{l_n}$).

In all the rest of the paper, and without mentioning it
explicitly, we shall consider only germs of diffeomorphisms
whose differential is diagonal.

\begin{definition}\label{one-resonant}
For $F\in \Dif$, assume that the differential $dF_0$  has
eigenvalues $\lambda_1,\ldots, \lambda_n$. We say that $F$ is
{\sl one-resonant with respect to the first $m$ eigenvalues
$\{\lambda_1,\ldots, \lambda_m\}$} ($1\le m\leq n$) if there
exists a fixed multi-index $\alpha=(\alpha_1,\ldots, \alpha_m,
0,\ldots, 0)\ne0\in\N^n$  such that the resonances $(j,l)$ with
$j\in\{1,\ldots,m\}$ are precisely of the form $(j,\a k+e_j)$,
where $e_j\in\N^n$ is the unit vector with $1$ at the $j$th
place and $0$ otherwise and where $k\ge1\in\N$ is arbitrary.
(In particular, it follows that the relation
$\l_1^{\a_1}\cdots\l_m^{\a_m}=1$ holds and generates
all other relations $\l_1^{\b_1}\cdots\l_n^{\b_n}=1$ with $\b_{s}\ge0$ for all $s$.) The multi-index $\alpha$ is
called the {\sl index of resonance}. If $F$ is one-resonant
with respect to $\{\lambda_1,\ldots, \lambda_n\}$ (i.e. $m=n$)
we simply say that $F$ is one-resonant.
\end{definition}

The notion of one-resonance for $m=n$ has been known in the literature,
see e.g.\ \cite{I,St}. However, its generalization for $m<n$ given here seems to be new.
The following class of examples illustrates the difference.

\begin{example}\label{mn}
Let $F\in{{\sf Diff}(\C^3;0)}$ be any diffeomorphism with
eigenvalues $\l,\mu,\nu$ of $dF_0$ such that $\l$ is a root of
unity, $|\mu|<1$ and $\nu=\mu^s$ for some natural number $s\ge
1$. Then $F$ is one-resonant with respect to $\l$ but has
resonances of the form $(3,se_2)$ showing it is not
one-resonant with respect to all the eigenvalues.
\end{example}

\begin{remark}\label{diff}
It follows directly from the definition that, if $F$ is
one-resonant with respect to $\{\lambda_1,\ldots, \lambda_m\}$,
then $\lambda_j\neq \lambda_s$ for any $j\in\{1,\ldots,m\}$ and $s\in\{1,\ldots,n\}$
with $j\neq s$. Indeed, otherwise one would have resonances of type
$(j,k\a+e_s)$ which are not of the required form $(j,k\a+e_j)$.
\end{remark}

\begin{example}\label{one-qp}
The same diffeomorphism can be considered one-resonant with
respect to different groups of eigenvalues. For instance,
consider $F(z,w)=(z+z^3, e^{2\pi i \theta}w+zw)$, where
$\theta$ is irrational. Then $F$ is one-resonant with respect
to $\lambda_1=1$ with index of resonance $(1,0)$. But also $F$
is one-resonant (with respect to $\{\l_1,\l_2\}=\{1,e^{2\pi i
\theta}\}$ with the same index of resonance $(1,0)$).
(Note that the higher order terms of F play no role here but will be
used later in Example~\ref{3.4}.)
\end{example}

As the previous example shows, there may exist ``non-maximal''
sets of ``one-resonant eigenvalues''. However, it is easy to
see from the definition that any set of ``one-resonant
eigenvalues'' is contained in the unique maximal set and
containes the unique minimal set. Namely, let $F$ be
one-resonant with respect to $\{\lambda_1,\ldots, \lambda_m\}$
and assume that the index of resonance is
$\a=(\a_1,\ldots,\a_m,0,\ldots,0)$. Since  the relation
$\l_1^{\a_1}\cdots\l_m^{\a_m}=1$ holds and generates all other
relations $\l_1^{\b_1}\cdots\l_n^{\b_n}=1$ with $\b_{s}\ge0$
for all $s$, it follows that any other resonant set of
eigenvalues corresponds to the same index $\a$. Then it follows
directly from the definition that every set of one-resonant
eigenvalues contains the minimal set $L$ of all $\l_j$ with
$\a_j\ne0$ and the set $L$ itself is one-resonant. On the other
hand, let $\2L$ be the set of all $\l_j$ such that any
resonance $(j,l)$ is of the required form $(j,k\a+e_j)$. Then
$\2L$ is the maximal one-resonant set that contains any other
one-resonant set of eigenvalues.

The choice of the set of eigenvalues with respect to which the
map is considered one-resonant depends on the problem one is
facing, in our main result Theorem~\ref{main} it is natural to
consider one-resonance with respect to the set of all
eigenvalues of modulo one.

\section{Normal form for non-degenerate one-resonant diffeomorphisms}\label{3}

Let $F\in \Dif$ be one-resonant with respect to
$\{\lambda_1,\ldots, \lambda_m\}$ with index of resonance $\a$.
Using Poincar\'e-Dulac theory (see, {\sl e.g.} \cite[Chapter
IV]{Ar}), one can formally conjugate $F$ to a germ
$G=(G_1,\ldots, G_n)$ such that
\begin{equation}\label{lam}
G_j(z)=\lambda_j z_j +a_j z^{\alpha k}z_j + R_j(z),\quad j=1,\ldots, m,
\end{equation}
where either $a=(a_1,\ldots,a_m)\neq 0$ and $R_j(z)$ contains
only resonant monomials $a_{js}z^{\a s}z_{j}$ with $s>k$ or
$a_j=0$ and $R_j\equiv 0$ for all $j=1,\ldots, m$. Note that
the second case occurs precisely when $F$ is {\sl formally
linearizable} in the first $m$ variables.

\begin{definition}\label{nondeg}
Let $F\in \Dif$ be one-resonant with respect to
$\{\lambda_1,\ldots, \lambda_m\}$ such that
\begin{equation}\label{lam'}
F_j(z)=\lambda_j z_j +a_j z^{\alpha k}z_j + O(|z|^{|\a|k+2}),\quad j=1,\ldots, m,
\end{equation}
with $k\ge1$ and $a=(a_1,\ldots,a_m)\ne0$, where $\a$ is the index of resonance.
Set
\begin{equation}\label{L}
\Lambda=\L(F):=\sum_{j=1}^m \frac{{a_j} \alpha_j}{{\lambda_j}}.
\end{equation}
We say that $F$ is {\sl non-degenerate} if $\Lambda\neq 0$.
\end{definition}

\begin{remark}\label{com-cam}
The integer $k$  in \eqref{lam'} is invariant under
conjugations preserving the form \eqref{lam'} and the vector
$a=(a_1,\ldots,a_{m})$ is invariant up to multiplication by a
scalar. In particular, the non-degeneracy condition given by
Definition~\ref{nondeg} is invariant. Indeed, if the
conjugation with a map $\psi=(\psi_1,\ldots,\psi_n)\in\Dif$
preserves the form \eqref{lam'} (possibly changing $a$), then
$\psi_j(z)=b_jz_j+O(|z|^2)$, $b_j\in\C^*$, for any
$j=1,\ldots,m$, in view of Remark~\ref{diff}. Conjugating with
the linear part of $\psi$, we see that for any such $j$,  $a_j$
is replaced by $a_j b^{\alpha k}$. Assume now that
$\psi(z)=z+O(|z|^2)$. Then by the Poincar\'e-Dulac theory,
since $\psi$ preseves \eqref{lam'}, all terms of order less
than $|\a|k+2$ that $\psi$ has in its first $m$ components must
be resonant and therefore $a$ is invariant.
\end{remark}

\begin{definition}\label{orderF}
We call the invariant number $k$ the {\sl order} of $F$ with respect to
$\lambda_1,\ldots, \lambda_m$.
\end{definition}

\begin{example}\label{3.4}
Let $F$ be the germ given in Example \ref{one-qp}. Then $F$ is
{\sl non-degenerate} when regarded as a one-resonant germ with
respect to the eigenvalue $1$ (with $k=2$ and $a=a_1=1$). But it becomes {\sl
degenerate} when regarded as a one-resonant germ (with respect to both eigenvalues $\{1, e^{2\pi
i \theta}\}$), because in that case $a=(a_1,a_2)=(0,1)$ and the
index of resonance is $(1,0)$, thus $\Lambda(F)=0$.
The main reason being the change of the order $k$.

Note that, more generally, for a germ of the form $(z+\ldots,
e^{2 \pi i \theta}w+\ldots)$ with $\theta$ irrational, the
condition of being {\sl non-degenerate} with respect to
$\{1,e^{2\pi i \theta}\}$ is equivalent to $F$ being {\sl
dynamically separating} in the terminology of \cite{B-M} (see
Subsection \ref{nonsep}).
\end{example}

As illustrated by the latter example, if one passes from
a smaller set of one-resonant eigenvalues to a larger one,
the order $k$ may drop, in which case the corresponding non-degeneracy
conditions are not related, i.e.\ $F$ can be non-degenerate
with respect to the smaller set but not the larger one
or with respect to the larger but not the smaller one.
On the other hand, if the order $k$ is the same for both sets,
since both sets contain the set of all $\l_j$ with $\a_j\ne0$,
the (non-)degeneracies with respect to the smaller and larger sets
are clearly equivalent.

\begin{remark}\label{non-form-lin}
If $F$ is one-resonant  with respect to
$\{\lambda_1\}$, then $\lambda_1$ is a root of unity.
Moreover, in this case $F$ is non-degenerate if and only if it is not formally linearizable
in the first component.
\end{remark}


We have the following {\em normal form} for non-degenerate partially one-resonant diffeomorphisms.

\begin{theorem}\label{normal}
Let $\in \Dif$ be one-resonant and non-degenerate with respect
to $\lambda_1,\ldots,\lambda_m$ with index of resonace $\a$. Then there exist  $k\in \N$
and numbers $\mu, a_1,\ldots, a_m\in \C$ such that $F$ is
formally conjugated to the map
$\hat{F}(z)=(\hat{F}_1(z),\ldots, \hat{F}_n(z))$, where
\begin{equation}\label{normal-f}
\hat{F}_j(z)=\lambda_j z_j +a_j z^{k\a}z_j +\mu {\alpha_j}{\1\l_{j}}^{-1}z^{2k\a} z_j,
\quad j=1,\ldots, m,
\end{equation}
and the components $\hat{F}_j(z)$ for $j=m+1,\ldots,n$, contain only resonant monomials.
\end{theorem}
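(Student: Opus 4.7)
The plan is to reduce $F$ to its Poincar\'e-Dulac form first and then refine the remaining resonant coefficients by a level-by-level induction whose homological equation is set up within a class of leaf-preserving formal conjugations.

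I will first bring $F$ to Poincar\'e-Dulac form. Since for $j\le m$ all resonances are of the shape $(j,r\a+e_j)$, every resonant monomial in $F_j$ has the form $z^{r\a}z_j$, so that the first $m$ components become separated,
\begin{equation*}
F_j(z)=z_j\,g_j(u),\qquad g_j(u)=\l_j+\sum_{r\ge k}a_{j,r}u^r,\qquad u:=z^\a,\quad j=1,\ldots,m,
\end{equation*}
while for $j>m$ the component $F_j$ contains only resonant monomials with respect to the possibly larger resonance set of $\l_j$. Using $\l^\a=1$, one checks that conjugations of the form $\phi_j(z)=z_j p_j(u)$ for $j\le m$, with $p_j$ a formal unit in $u$, together with $\phi_j(z)=z_j$ for $j>m$, preserve both the separated form of the first $m$ components and the Poincar\'e-Dulac form of the remaining ones, since substitution $z_l\mapsto z_l p_l(u)$ for $l\le m$ multiplies any resonant monomial $z^\b$ by a formal series in $u$ and each $z^{\b+s\a}$ remains resonant.

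I then argue by induction on the level $r>k$. Assuming $a_{j,r'}=0$ for $k<r'<r$, I take $p_j(u)=1+c_j u^{r-k}$ and expand $\phi^{-1}\circ F\circ\phi$ modulo higher order terms. A direct calculation yields the homological equation at level $r$,
\begin{equation*}
\tilde a_{j,r}=a_{j,r}+k\,a_{j,k}\sum_{l=1}^m\a_l c_l-(r-k)\,\L\,\l_j\,c_j,\qquad j=1,\ldots,m,
\end{equation*}
that is $\tilde a_r=a_r+Mc$, where $M$ is the $m\times m$ matrix with entries $M_{jl}=k\,a_{j,k}\a_l-(r-k)\L\,\l_j\delta_{jl}$, a rank-one perturbation of a scalar multiple of $\mathrm{diag}(\l_1,\ldots,\l_m)$. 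The matrix determinant lemma combined with the identity $\L=\sum_l\a_l a_{l,k}/\l_l$ then gives
\begin{equation*}
\det M=(-1)^m(r-k)^{m-1}(r-2k)\,\L^m\prod_{j=1}^m\l_j,
\end{equation*}
which is nonzero for every $r\ne2k$ thanks to $\L\ne0$; in that case $(c_j)$ is uniquely determined to force $\tilde a_{j,r}=0$ for all $j$.

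The sole obstruction occurs at the critical level $r=2k$, where $M$ has rank $m-1$ and its image is the hyperplane $\{d\in\C^m:\sum_l\a_l d_l/\l_l=0\}$, i.e.\ the Hermitian-orthogonal complement of the line spanned by $(\a_j/\1\l_j)_j$. Consequently, any $(a_{j,2k})_j$ admits a unique decomposition $-Mc+\mu\,(\a_j/\1\l_j)_j$ with $\mu\in\C$, and the corresponding choice of $c$ reduces the coefficient at level $2k$ to $\mu\,\a_j{\1\l_j}^{-1}$, which is precisely the term in \eqref{normal-f}. Iterating through all $r>k$ produces, as a limit in the Krull topology on formal power series, the normal form asserted by the theorem. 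The main technical point is the determinant computation, and the non-degeneracy $\L\ne0$ is exactly what ensures that $M$ drops rank by one at the single critical level $r=2k$, leaving the one-parameter residual $\mu$ as the invariant.
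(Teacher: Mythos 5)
Your proof is correct and follows the same essential mechanism as the paper's: bring $F$ to Poincar\'e--Dulac form, set up a level-by-level homological equation whose linear operator is a rank-one perturbation of a scalar times $\mathrm{diag}(\l_1,\ldots,\l_m)$, and observe that it is invertible precisely when the level $r\ne 2k$, with a one-dimensional cokernel spanned by $(\a_j/\1\l_j)_j$ at the critical level $r=2k$, leaving the single modulus $\mu$. Two points in your write-up are nicer than the original: (i) you verify $\det M=(-1)^m(r-k)^{m-1}(r-2k)\L^m\prod\l_j$ via the matrix determinant lemma, whereas the paper argues indirectly through the spectrum of the rank-one perturbation; and (ii) you formulate the conjugations explicitly as leaf-preserving maps $\phi_j(z)=z_jp_j(z^\a)$ and check that these preserve both the separated form of the first $m$ components and the Poincar\'e--Dulac form of the remaining ones, which makes the paper's closing remark about eliminating newly created non-resonant terms unnecessary. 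One small imprecision: at $r=2k$ the decomposition $(a_{j,2k})_j=-Mc+\mu(\a_j/\1\l_j)_j$ determines $\mu$ uniquely but not $c$, since $M$ has a one-dimensional kernel; this does not affect the conclusion.
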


\begin{proof}
By the Poincar\'e-Dulac theory, we may assume that $F_j(z)$ for
$j=m+1,\ldots,n$, contain only resonant monomials and
\begin{equation}\label{lam1}
F_j(z)=\lambda_j z_j +a_j z^{k\a}z_j + \sum_{l\ge1} a_{jl} z^{(l+k)\a}z_j ,\quad j=1,\ldots, m.
\end{equation}
With the notation $F':=(F_{1},\ldots,F_{m})$,
$\l':=\hbox{diag}(\l_{1},\ldots,\l_{m})$,
$z':=(z_{1},\ldots,z_{m})$, we can rewrite \eqref{lam1} in the
more compact form
\begin{equation}
F'(z)=\l' z' + z^{k\a}\sum_{j} a_{j}z_{j}e_{j} + \sum_{k'>k} z^{k'\a} \sum_{j} a_{k'j}z_{j}e_{j},
\end{equation}
where the summation over $j$ is understood from $1$ to $m$
and $e_{j}$ is the unit vector with $1$ at the $j$th place and $0$ otherwise.

We now study the conjugation
$
\2F=\Theta\circ F\circ \Theta^{-1}
$
under that map
\begin{equation}\label{theta}
\Theta(z)=z+ \theta(z), \quad \theta(z)=(z^{l\a} \sum_{j} b_{j}z_{j}e_{j},0) =(b_1z^{l\a}z_1,\ldots,b_mz^{l\a}z_m,0,\ldots, 0),
\end{equation}
for an integer $l\ge1$ and a vector $b=(b_{1},\ldots,b_{m})\in\C^{m}$.
We also use the notation
$$F(z)=\l z + f(z), \quad \2F(z)=\l z + \2f(z), \quad f,\2f=O(2),$$
and the Taylor expansions
\begin{equation}\label{exp1}
\2f(z+h)= \2f(z)+ \sum_{r\ge 1} \frac1{r!} \2f^{(r)}(z)(h),
\quad \theta(z+h)=  \theta(z)+\sum_{r\ge 1}\frac1{r!} \theta^{(r)}(z)(h),
\end{equation}
where the derivatives $\2f^{(r)}(z)(h)$ and
$\theta^{(r)}(z)(h)$ are regarded as $n$-tuples of homogeneous
polynomials of degree $r$ in $h$. We use \eqref{exp1}  to rewrite the
identity
\begin{equation}
\2F(\Theta(z)) = \Theta(F(z))
\end{equation}
 as
\begin{equation}\label{main-id0}
\2f(z)  + \l\theta(z)+ \sum_{r\ge1}\frac1{r!}\2f^{(r)}(z) (\theta(z))
= \theta(\l z) + f(z)+
\sum_{r\ge1} \frac1{r!}\theta^{(r)}(\l z) (f(z)).
\end{equation}
In view of the resonance relations, we have $\l\theta(z)=\theta(\l z)$
and hence \eqref{main-id0} is equivalent to
\begin{equation}\label{main-id}
\2f(z) - f(z)=\sum_{r\ge1}\frac1{r!}\left( \theta^{(r)}(\l z) (f(z)) - \2f^{(r)}(z) (\theta(z))\right).
\end{equation}

Now identifying terms of order up to $k|\a|+1$ in \eqref{main-id}, we conclude
by induction on the order that
\begin{equation}\label{f'}
\2f'(z) = f'(z) + O(|z|^{k|\a|+2}) = z^{k\a}\sum_{j} a_{j}z_{j}e_{j} + O(|z|^{k|\a|+2}),
\end{equation}
where $\2f'=(\2f_{1},\ldots,\2f_{m})$.
Next, identifying terms of order up to $(k+l)|\a|+1$, we obtain
$$\2f'(z)-f'(z)=  \theta'^{(1)}(\l z) (f(z)) - \2f'^{(1)}(z) (\theta(z)) + O(|z|^{(k+l)|\a|+2}).$$
Substituting $f'$, $\2f'$ from \eqref{f'} and $\theta$ from \eqref{theta}, we find
\begin{equation}
\2f'(z)-f'(z)=
z^{(k+l)\a}\sum_{j,s} a_{j}b_{s}  \big((l\a_{j}\l^{l\a-e_{j}+e_{s}} + \delta_{js}\l^{l\a})z_{s}e_{s}
- (k\a_{s} + \delta_{js})z_{j}e_{j}
\big)
+ O(|z|^{(k+l)|\a|+2}).
\end{equation}
By the resonance conditions, $\l^{l\a}=1$. In particular, the terms with $\delta_{js}$
cancel each other and we obtain
\begin{multline}\label{final}
\2f'(z)-f'(z)=
z^{(k+l)\a}\sum_{j,s} a_{j}b_{s}  \big(l\a_{j}\l^{e_{s}-e_{j}}z_{s}e_{s}
- k\a_{s}z_{j}e_{j}
\big)
+ O(|z|^{(k+l)|\a|+2})\\
= z^{(k+l)\a} bAZ + O(|z|^{(k+l)|\a|+2}),
\end{multline}
where $b=(b_{1},\ldots,b_{m})$, $Z$ is the diagonal matrix
with entries $z_{1},\ldots,z_{m}$ and $A$ is the $m\times m$
matrix given by
$$A= l ({a}{L}^{-1} \a^{t}) L - k\a^{t}a. $$
Here $\a^{t}$ is the transpose of $\a$ and
$L$ is the diagonal matrix with entries $\l_{1},\ldots,\l_{m}$.
Note that the expression in parentheses is a scalar.
Then
\[
A=CL, \quad  C=  l({a}{L}^{-1} \a^{t}) \id - k\a^{t}{a}{L}^{-1}.
\]

Since the matrix $k\a^{t}{a}{L}^{-1}$ is of rank one, it has
at most one nonzero eigenvalue equal to its trace
$k{a}{L}^{-1}\a^{t}$. By our nondegeneracy assumption, this
trace is actually different from zero. The first matrix in the
expression of $C$ is scalar with all its diagonal entries equal to
$l{a}{L}^{-1}\a^{t}$. Since ${a}{L}^{-1}\a^{t}\ne0$, we
conclude that $C$ is invertible if and only if $l\ne k$. Since
$f$ has the form \eqref{lam1}, given any $l\ne k$, it follows
from \eqref{final} that there exists (unique) vector $b$ such
that $\2f'=f'+O(|z|^{(k+l)|\a|+1})$ and the terms of $\2f'$ of
order $(k+l)|\a|+1$ all vanish.

On the other hand, in case $l=k$, $C$ has rank $m-1$.
In this case we use the identity
$$
bC\a^{t} = kb\left({a}{L}^{-1}\a^{t}\right)\a^{t} - kb\left(\a^{t}{a}{L}^{-1}\right)\a^{t}
= k({a}L^{-1} \a^{t})b\a^{t} - k b\a^{t} ({a}{L}^{-1}\a^{t})=0,
$$
where we have used that both $b$ and $b\a^{t}$ commute with the scalar ${a}L^{-1}\a^{t}$.
Hence $\a^{t}$ annihilates the image of the map $b\mapsto bC$.
Since $C$ has rank $m-1$, its image is precisely the orthogonal complement of $\a$
(with respect to the standard hermitian scalar product on $\C^{m}$, note that $\1\a=\a$).
Then the image of the map $b\mapsto bA$ is precisely the orthogonal complement
of ${\a}{\1L}^{-1}=({\a_{1}}{\1\l_{1}}^{-1},\ldots,{\a_{m}}{\1\l_{m}}^{-1})$.
It now follows from \eqref{final} that, choosing suitable $b$,
we can arrange that the term of $\2f'$ of order $2k|\a|+1$
equals
\begin{equation}\label{2k}
z^{2k\a}\mu{\a}{\1L}^{-1}Z = z^{2k\a}\mu \sum_{j}{\a_{j}}{\1\l_{j}}^{-1}z_{j}e_{j}
\end{equation}
for some $\mu\in\C$.

We now apply inductively the above procedure for each $l\ge1$, either to eliminate
the corresponding term in \eqref{lam1} or normalize it as in \eqref{2k},
by conjugating with a suitable map \eqref{theta} for that number $l$.
At each step we may create nonresonant terms whose order
must be greater than $l|\a|+1$ in view of \eqref{main-id0}.
Those terms can be eliminated inductively according to the  Poincar\'e-Dulac theory
by conjugation with further maps $\Theta(z)=z+\theta(z)$ with $\theta(z)$ being suitable monomials
of order greater than $l|\a|+1$. Again using \eqref{main-id0}
we see that those additional conjugations does not affect the normalized
terms of order $l|\a|+1$. Thus by induction on $l$, we obtain the desired normalization
\eqref{normal-f}.
\end{proof}

\begin{remark}
It is clear from the proof of Theorem \ref{normal} that, for
any given $t\in \N$ there exists a holomorphic (polynomial)
change of coordinates which transforms $F$ into $\hat{F}+O(t)$,
where $\hat{F}$ satisfies \eqref{normal-f} and $O(t)$ denotes a
function vanishing of order $\geq t$ at $0$.
\end{remark}

\section{Dynamics of normal forms}\label{five}

Motivated by Theorem~\ref{normal}, we shall first study the dynamics of a one-resonant
diffeomorphism $G\in\Dif$  of the form $G(z)=(G_1(z),\ldots,
G_n(z))$ with
\begin{equation}\label{f-normj}
G_j(z)=\lambda_jz_j+a_j z^{k\a} z_j +b_j z^{2k\a}z_j,
\quad j=1,\ldots,n,
\end{equation}
and $\L=\Lambda(G)\neq 0$, where $\L(G)$ is as in Definition~\ref{nondeg}.
We consider the singular foliation $\mathcal F$ of $\C^{n}$ given by
$\{z^\a=\hbox{const}\}$.

\begin{lemma}
The foliation $\mathcal F$ is $G$-invariant.
\end{lemma}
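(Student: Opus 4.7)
The plan is to show that the function $u(z) := z^\alpha$ is a semi-invariant of $G$, in the sense that $u\circ G$ depends only on $u$; this immediately gives $G$-invariance of every level set $\{z^\alpha = c\}$, i.e.\ of $\mathcal F$.

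First I would use that the multi-index of resonance has the form $\alpha = (\alpha_1,\ldots,\alpha_m,0,\ldots,0)$, so that
\[
G(z)^\alpha \;=\; \prod_{j=1}^{n} G_j(z)^{\alpha_j} \;=\; \prod_{j=1}^{m} G_j(z)^{\alpha_j},
\]
which eliminates the need to examine the last $n-m$ components of $G$. For $j\le m$ I would factor $\lambda_j z_j$ out of \eqref{f-normj} to get
\[
G_j(z) \;=\; \lambda_j z_j\left(1 + \frac{a_j}{\lambda_j}\,z^{k\alpha} + \frac{b_j}{\lambda_j}\,z^{2k\alpha}\right),
\]
and therefore
\[
G(z)^\alpha \;=\; \Bigl(\prod_{j=1}^{m}\lambda_j^{\alpha_j}\Bigr)\Bigl(\prod_{j=1}^{m} z_j^{\alpha_j}\Bigr)\prod_{j=1}^{m}\left(1 + \frac{a_j}{\lambda_j}\,z^{k\alpha} + \frac{b_j}{\lambda_j}\,z^{2k\alpha}\right)^{\alpha_j}.
\]

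The three factors are now easy to read off. By the one-resonance relation recorded in Definition~\ref{one-resonant}, $\prod_{j=1}^{m}\lambda_j^{\alpha_j}=1$; the middle factor equals $z^\alpha$ by definition; and each factor of the last product is a power series in $z^{k\alpha}=(z^\alpha)^k$. Hence there exists a holomorphic function $\Phi$ near $0\in\C$ with $\Phi(0)=1$ such that
\[
G(z)^\alpha \;=\; z^\alpha\,\Phi(z^\alpha),
\]
showing that $z\mapsto z^\alpha$ is constant along $G$-orbits of each leaf, and so $\mathcal F$ is $G$-invariant.

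There is essentially no hard step: the only small point to keep in mind is that although $G_{m+1},\ldots,G_n$ may carry additional resonant terms (as in the remark following Theorem~\ref{normal}), they contribute trivially to $G(z)^\alpha$ because $\alpha_j=0$ for $j>m$; everything else follows formally from the resonance relation $\lambda^\alpha=1$ and the fact that the only non-linear monomials appearing in $G_j$ for $j\le m$ are powers of $z^{k\alpha}$.
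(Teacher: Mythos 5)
Your proof is correct and follows essentially the same route as the paper: compute $\pi(G(z))=G(z)^\alpha$ explicitly, observe it is a holomorphic function of $z^\alpha$ alone, and conclude that $G$ maps leaves of $\mathcal F$ to leaves. (The one minor slip is the final phrase ``constant along $G$-orbits of each leaf'': what the computation actually shows is that $u\circ G$ is a function of $u$, hence $G$ maps each leaf into a single leaf — not that $u$ is constant along $G$-orbits.)
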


\begin{proof}
Let $\pi(z)=z^\a$. Then
\begin{equation*}
\pi(G(z))=z^\a \prod_{j=1}^n (\l_j + a_j z^{k\a} +b_j z^{2k\a})^{\a_{j}},
\end{equation*}
where the right-hand side is clearly a holomorphic function of $z^\a$.
 Hence $G$ maps leaves of $\mathcal F$
into (possibly different) leaves of $\mathcal F$
and the desired conclusion follows.
\end{proof}

Let $\mathcal L$ denote  the space of leaves of $\mathcal F$.
Let $\pi:\C^n\to \mathcal L$ be the projection given by
$(z_1,\ldots, z_n)\mapsto z^\a$. Clearly, $\mathcal L\simeq
\C$. Let $u=z^\a=\pi(z)$. The action of $G$ on $\mathcal L$ is
given by
\begin{equation}\label{phi-u}
\Phi(u):=G_1(z)^{\a_1}\cdots G_n(z)^{\a_n}=u+\Lambda(G) u^{k+1} +O(|u|^{k+2}),
\end{equation}
where we have used that $\l^\a=1$. Note that $\Phi:(\C,0)\to (\C,0)$ is
locally biholomorphic.  Let $v_1,\ldots, v_k$ be the attracting directions for $\Phi$,
and $P_j\subset\C$, $j=1,\ldots,k,$ attracting
petals  centered at $v_j$ (see Section~\ref{preli}). Set
\[
U_j:=\pi^{-1}(P_j)\subset\C^{n}.
\]
Since $\mathcal F$ is $G$-invariant, the domains $U_j$ are also
$G$-invariant.

Let $z\in U_j$. Then $\Phi^{\circ m}(\pi(z))\to 0$ as $m\to
\infty$. In order to understand the dynamics of $G$, it is then
sufficient to understand the ``motion'' along the leaves of
$\mathcal F$.  As a matter of notation, let $p_j(z_1,\ldots, z_n)=z_j$.

\begin{proposition}\label{norm-dynamics}
Let $G\in\Dif$ be in the normal form \eqref{f-normj} with $\L=\Lambda(G)\neq
0$. Fix $1\le j\le n$ and $1\le t\le k$.
\begin{enumerate}
\item If $|\lambda_j|<1$, then for all $z\in U_t$, one has $\lim_{m\to\infty} p_j\circ
  G^{\circ m}(z)=0$.
  \item If $|\lambda_j|>1$, then for all $z\in U_t$ with $z_{j}\ne0$, one has $\lim_{m\to\infty} p_j\circ
  G^{\circ m}(z)=\infty$.
  \item If $|\lambda_j|=1$ and $\Re ({a_j}{\lambda_j}^{-1}\L^{-1}) >0$, then
  for all $z\in U_t$, one has $\lim_{m\to\infty} p_j\circ
  G^{\circ m}(z)=0$.
  \item If $|\lambda_j|=1$ and  $\Re ({a_j}{\lambda_j}^{-1}\L^{-1}   ) <0$, then
  for all $z\in U_t$ with $z_{j}\ne0$, one has
  $\lim_{m\to\infty} p_j\circ G^{\circ m}(z)=\infty$.
\end{enumerate}
\end{proposition}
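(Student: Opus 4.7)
The plan is to iterate the normal form componentwise and reduce each of the four statements to a single scalar summation controlled by the Leau--Fatou asymptotics of Theorem~\ref{LF}. Write $z^{(m)}:=G^{\circ m}(z)$ and $u_m:=(z^{(m)})^{\alpha}=\Phi^{\circ m}(u_0)\in P_t$. The $j$-th component of the recursion factors cleanly as
\[
z^{(m+1)}_j \;=\; z^{(m)}_j\bigl(\lambda_j + a_j\,u_m^k + b_j\,u_m^{2k}\bigr),
\]
so, taking moduli, logarithms and telescoping,
\[
\log|z^{(m)}_j|-\log|z^{(0)}_j| \;=\; m\log|\lambda_j| \;+\; \sum_{l=0}^{m-1}\log\bigl|1 + (a_j/\lambda_j)u_l^k + (b_j/\lambda_j)u_l^{2k}\bigr|.
\]
The entire proposition is then just a matter of extracting the leading behaviour of the right-hand side in each of the four regimes.

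For (1) and (2), $|\lambda_j|\ne 1$ so $m\log|\lambda_j|$ diverges linearly to $\mp\infty$. Each summand is $O(|u_l|^k)=O(1/l)$ by Theorem~\ref{LF}(2), so the sum contributes only $O(\log m)$, which is negligible compared with the linear term. This gives geometric decay of $|z^{(m)}_j|$ in (1) and geometric growth in (2). In case (2) the assumption $z_j\ne 0$, together with the fact that the factor $\lambda_j + a_j u^k + b_j u^{2k}$ is nonzero at $u=0$ and hence on a sufficiently small petal (and the petals in Theorem~\ref{LF} may be shrunk at will), guarantees that $z^{(m)}_j$ remains nonzero for all $m$, so the logarithm is well defined and the growth forces $|z^{(m)}_j|\to\infty$.

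For (3) and (4) the linear term vanishes and everything hinges on the precise asymptotic of the sum. Using $\log|1+w|=\Re w + O(|w|^2)$,
\[
\log\bigl|1+(a_j/\lambda_j)u_l^k+O(u_l^{2k})\bigr| \;=\; \Re\bigl((a_j/\lambda_j)\,u_l^k\bigr) + O(|u_l|^{2k}).
\]
Combining Theorem~\ref{LF}(1)--(2) with the identity $v_t^k=-|\Lambda|/\Lambda$ characterising the attracting directions (equivalently, the Leau--Fatou coordinate $\phi(w)=w+1+O(|w|^{-1})$ discussed after Theorem~\ref{LF}) upgrades the modulus estimate to the phase-sensitive asymptotic
\[
u_l^k \;=\; -\frac{1}{k\,\Lambda\,l}\bigl(1+o(1)\bigr),\qquad l\to\infty.
\]
Substituting and summing, the principal term contributes $-\tfrac{1}{k}\Re(a_j\lambda_j^{-1}\Lambda^{-1})\log m + O(1)$ and the remainder $\sum O(1/l^2)=O(1)$, so
\[
\log|z^{(m)}_j| \;=\; -\tfrac{1}{k}\,\Re\bigl(a_j\lambda_j^{-1}\Lambda^{-1}\bigr)\log m \;+\; O(1).
\]
The sign of $\Re(a_j\lambda_j^{-1}\Lambda^{-1})$ then dictates the dichotomy between (3) and (4), with the nonvanishing argument above again ensuring well-definedness in (4) under the hypothesis $z_j\ne 0$.

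The only genuine obstacle is the refinement from the modulus statement $|u_l|^k\sim 1/l$ to the phase-sensitive asymptotic $u_l^k\sim -1/(k\Lambda l)$: this is precisely where the direction part of Theorem~\ref{LF} and the formula $v_t^k=-|\Lambda|/\Lambda$ enter in an essential way. Once this refinement is in place, the four cases separate cleanly according to the sign of $\log|\lambda_j|$ in the hyperbolic regime and of $\Re(a_j\lambda_j^{-1}\Lambda^{-1})$ in the borderline regime, and the rest is a routine comparison with the harmonic series.
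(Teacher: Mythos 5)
Your proposal is correct and follows essentially the same route as the paper: write $p_j\circ G^{\circ m}(z)=\lambda_j^m z_j\prod_{l}(1+A_l)$ with $A_l=\frac{a_j}{\lambda_j}u_l^k+\frac{b_j}{\lambda_j}u_l^{2k}$, take logarithms, and control the sum via the Leau--Fatou asymptotics along the petal. In cases (1)--(2) both arguments observe that the sum is $O(\log m)$, negligible against $m\log|\lambda_j|$; in cases (3)--(4) both reduce to the sign of $\Re(a_j\lambda_j^{-1}\Lambda^{-1})$ and a comparison with the harmonic series.

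The one place where you take a slightly different tack is the borderline cases (3)--(4). The paper gets by with the weaker input actually stated in Theorem~\ref{LF}: the two-sided bound $|u_l|^k\sim 1/l$ from LF(2) together with the directional limit $u_l^k/|u_l|^k\to v_t^k=-|\Lambda|/\Lambda$ from LF(1), which already force the summand to be eventually of one sign and comparable to $\pm1/l$, so the series diverges. You instead upgrade to the precise one-term asymptotic $u_l^k=-\frac{1}{k\Lambda l}(1+o(1))$, which is \emph{not} a consequence of LF(1)--(2) as literally stated (the paper's ``$\sim$'' means only two-sided bounded comparison); it requires the Leau--Fatou coordinate $\phi(w)=w+1+O(|w|^{-1})$ from the remark following the theorem, which you do cite. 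Both routes are sound; yours yields the sharper growth rate $\log|z^{(m)}_j|=-\tfrac1k\Re(a_j\lambda_j^{-1}\Lambda^{-1})\log m+o(\log m)$ (your ``$+O(1)$'' is slightly optimistic without a more careful error bound on the Fatou coordinate, but this does not affect the dichotomy), while the paper's is minimally self-contained relative to the stated Theorem~\ref{LF}.

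One small point you handle implicitly but should keep in mind: in cases (2) and (4) the bound $\bigl|\sum\log|1+A_l|\bigr|\le C\log m$ (resp. $\le\tfrac1k|\Re(a_j\lambda_j^{-1}\Lambda^{-1})|\log m+o(\log m)$) shows the product is subgeometric in both directions, so it cannot cancel the geometric growth $|\lambda_j|^m$ (resp. cannot overcome the hypothesis $z_j\neq0$); this is exactly the paper's ``much slower than $|\lambda_j|^m$'' remark.
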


\begin{proof}
Note that by construction $\Phi(\pi(z))=\pi(G(z))$. We can
write for $j=1,\ldots, n$,
\[
G_j(z)=(\lambda_j+a_j u^k +b_j u^{2k})z_j
\]
and, letting $u_l:=\Phi^{\circ l}(u)=\pi(G^{\circ l}(z))$,
\[
p_j \circ G^{\circ m}(z)=\lambda_j^m\prod_{l=1}^m
\left(1+\frac{a_j}{\lambda_j} u_l^k +\frac{b_j}{\lambda_j} u_l^{2k}\right)z_j.
\]
We examine the asymptotical behavior of the infinite product
\begin{equation}\label{inf-p}
\prod_{l=1}^\infty
\left(1+\frac{a_j}{\lambda_j} u_l^k +
\frac{b_j}{\lambda_j} u_l^{2k}\right).
\end{equation}
Let $z\in U_t$, therefore $u=\pi(z)\in P_t$.
By Theorem \ref{LF}, part (2), it follows that $|u^k_l|=|u_l|^k\sim
\frac{1}{l}$.

Let $A_l:=\frac{a_j}{\lambda_j}u_l^k +
\frac{b_j}{\l_j}u_l^{2k}$. We examine the behavior of
$\prod_{l=1}^m|1+A_l|$. Taking the logarithm  we have
\[
\log\left( \prod_{l=1}^m |1+A_l|\right)=\frac{1}{2}\sum_{l=1}^m \log (1+|A_l|^2+2\Re A_l).
\]
For $l>>1$, $||A_l|^2+2\Re A_l|\sim l^{-c}$ for some $c\in
\N^\ast$. Hence, since for $l>>1$, $\log (1+|A_l|^2+2\Re
A_l)\sim |A_l|^2+2\Re A_l$,
\[
\frac{1}{2}\sum_{l=1}^\infty |\log (1+|A_l|^2+2\Re A_l)|\sim  \frac{1}{2}\sum_{l=1}^\infty l^{-c}.
\]
From this it follows that the infinite product \eqref{inf-p}
either converges or goes to zero or infinity much slower than
$|\lambda_j|^m$ in case $|\lambda_j|\neq 1$. Thus (1) and (2)
follow.

As for (3) and (4) we need a better estimate. By Theorem
\ref{LF}, part (1), it follows that $\frac{u_l^k}{|u_l|^k}\to
v_t^k=-|\L|\L^{-1}$ as $l\to \infty$. Hence
\[
\lim_{l\to\infty}\Re \left(\frac{a_j}{\lambda_j} \frac{u_l^k}{|u_l|^k} \right)=\Re \left(\frac{a_j}{\lambda_j} v_t^k \right)=-\Re \left(\frac{a_j}{\lambda_j} \frac{|\L|}{\L} \right).
\]
Therefore in case (3), for $l$ large, $|A_l|^2+2\Re A_l\sim
(-l^{-1})$. Hence
\[
\log\left( \prod_{l=1}^\infty |1+A_l|\right)=\frac{1}{2}\sum_{l=1}^\infty \log (1+|A_l|^2+2\Re A_l)\sim \frac{1}{2}
\sum_{l=1}^m \frac{-1}{l}=-\infty,
\]
and thus (3) follows. Statement (4) is similar.
\end{proof}

\section{Dynamics of non-degenerate one-resonant
maps}\label{prova}

\begin{definition}
Let $F\in\Dif$ be  one-resonant  and non-degenerate with
respect to $\{\lambda_1,\ldots,\lambda_m\}$.
Let $k\in \N$ be the order of $F$ with respect to
$\lambda_1,\ldots, \lambda_m$ (see Definition \ref{orderF}).
Choose coordinates such that \eqref{lam'} holds. We say that
$F$ is {\sl parabolically attracting} with respect to
$\{\lambda_1,\ldots, \lambda_m\}$ if
\begin{equation}\label{parab}
|\l_{j}|=1, \quad
\Re\left( {a_j}{\lambda_j}^{-1}\L^{-1}\right)>0, \quad j=1,\ldots, m,
\end{equation}
where $\L=\L(F)$ is given by \eqref{L}
\end{definition}

\begin{remark}
The condition of being parabolically attracting is independent
of the coordinates chosen. To see this, let $\psi$ be a
transformation which preserves \eqref{lam'}, and let
$\tilde{F}:=\psi\circ F\circ \psi^{-1}$. In view of Remark
\ref{com-cam}, it suffices to check the invariance of
\eqref{parab} for $\psi$ linear with $\psi_{j}(z)=b_{j}z_{j}$,
$b_{j}\in\C^{*}$, for any $j=1,\ldots, m$. Then, $a_j$ is
replaced by $\tilde{a}_j:=a_j b^{\a k}$ and
$\Lambda(\tilde{F})=\Lambda(F)b^{\a k}$ from which the claim follows.
\end{remark}

\begin{remark}\label{radice}
If $F$ is one-resonant and non-degenerate with respect to
$\{\lambda_1\}$ (with $|\lambda_1|=1$), then it is always
parabolically attracting. Indeed, in such a case,
$\Lambda={a_{1}\a_{1}}{\l_{1}}^{-1}$ and
\[
\Re\left( {a_1}{\lambda_1}^{-1}\L^{-1}\right)=\a_{1}^{-1}>0.
\]
\end{remark}

\begin{definition}
Let $F\in\Dif$. We call a {\sl basin of attraction for $F$ at
$0$} a nonempty (not necessarily connected) open set
$U\subset\C^n$ with $0\in \1U$, for which there exists a
neighborhood basis $\{\Omega_j\}$ of $0$ such that $F(U\cap
\Omega_j)\subset U\cap \Omega_j$ and $F^{\circ m}(z)\to 0$ as
$m\to \infty$  whenever $z\in U\cap \Omega_j$ holds for some $j$.
\end{definition}

We are now ready to give the proof of Theorem~\ref{main}.

\begin{proof}[Proof of Theorem~\ref{main}]
Denote $u:=z^\a$. In view of Theorem~\ref{normal}, up to biholomorphic conjugation we can
assume that $F(z)=(F_1(z),\ldots, F_n(z))$ with
\begin{equation}\label{formF}
\begin{split}
F_j(z)&=(\lambda_j+a_j u^k+\mu \lambda_j\a_j u^{2k})z_j + O(|z|^l),\quad j=1,\ldots, m,\\
F_j(z)&=\lambda_j z_j + O(|z|^2), \quad j=m+1,\ldots, n,
\end{split}
\end{equation}
for any fixed $l$ to be chosen later. Also, acting with a dilation (cfr. Remark
\ref{com-cam}) we can assume that $\L=\Lambda(F)=-1/k$. Then, since $F$ is parabolically
attracting, we have
\begin{equation}\label{Dell}
\Re\left( {a_j}{\lambda_j}^{-1}\right)<0, \quad j=1,\ldots, m.
\end{equation}

Let  $R>0$ be a number we will suitably choose later. Let
\[
\Delta_R:=\left\{u\in \C: \left|u^k-\frac{1}{2R}\right|<\frac{1}{2R}\right\}.
\]
Note that $\Delta_R$ has exactly $k$ connected components
corresponding to different branches of the $k$th root. The
desired basins of attraction will be constructed by means of
the projection $z\mapsto z^\a$ over sectors contained in such
connected components.

We first construct a basin of attraction based on a sector
centered at the direction $1$, namely,

\begin{equation}\label{choose-sector}
S_R(\eps):=\{u\in \Delta_R: |{\sf Arg} u|<\epsilon\},
\end{equation}
for some small $\epsilon>0$ to be chosen later.

Let $\beta>0$ be such that $\beta |\a|<1$ and let
\[
B:=\{z=(z_1,\ldots, z_n)\in \C^n : |z_j|<|u|^\beta, j=1,\ldots, m, |(z_{m+1},\ldots, z_n)|<|u|^\beta, u:=z^\a\in S_R( \epsilon)\}.
\]
First of all, $B\neq \emptyset$ and $0\in \partial B$. Indeed,
it is easy to see that   $z_r=(r,\ldots, r)\in B$ for $r>0$
sufficiently small. Moreover, since the map $z\mapsto z^\a$ is
open and $0$ is not in the interior of $S_R(\eps)$, it follows
that $0\notin B$, i.e.\ $0\in\partial B$. Finally, the set $B$
is obviously open.

Next, we prove that $B$ is $F$-invariant. Let $z\in B$ and let
$u:=z^\a$. Let
\[
\Phi(u,z):=F_1^{\a_1}(z)\cdots F_m^{\a_m}(z)=u-\frac{1}{k}u^{k+1}+h_1(u)+h_2(z),
\]
where we consider $\Phi$ as a function of the variables $z,
u=z^\a$ and $h_1(u)=O(|u|^{k+2})$ and $h_2(z)=O(|z|^l)$.
We make the change of coordinates $U=u^{-k}$
and write
$$\2\Phi(U,z):=\Phi(U^{-\frac1{k}},z)^{-k}$$
for the map $\Phi$ in the new coordinates. Note that $u\in
S_R(\epsilon)$ if and only if $U\in H_R(\epsilon)$, where
\[
H_R(\epsilon):=\{w\in \C: \Re w>R, |{\sf Arg} w|<k\epsilon\}.
\]
Since $\Re( a_j\l_j^{-1})<0$, it is easy to see that, choosing $R$
sufficiently large and $\eps$ sufficiently small, we obtain
\begin{equation}\label{estimo}
\left|1+\frac{a_j}{\lambda_j} \frac{1}{w} +\mu \a_j\frac{1}{w^2}   \right|< 1-\frac{c}{|w|}
\end{equation}
for some $c>0$ and for all $w \in H_R(\eps)$.

Now fix $0<\delta<1/2$ such that
\begin{equation}\label{incl}
H_{R}(\epsilon)+1+\tau\subset H_{R}(\epsilon) \quad \text{ whenever } \quad |\tau|<\delta,
\end{equation}

Note that $\delta$ depends on $\epsilon$ but not on $R$. Fix
$0<c'<c$. By choosing $\beta<1/2$ sufficiently small, we can
assume that
\begin{equation}\label{servizio}
\beta(\delta+1)-c'k<0
\end{equation}
and choose $l>1$ such that
\begin{equation}\label{bk}
\beta l>k+1.
\end{equation}

After a direct computation we find
\begin{equation}\label{phio}
\2\Phi(U,z)=U\left( \frac{1}{1-\frac{1}{kU}+ U^{1/k} h_1(U^{-1/k})
+U^{1/k}{h_2(z)}}\right)^k.
\end{equation}
Since $|z|<n|u|^\beta$ in $B$, there exists $K>0$ such that
\[
|U|^{1/k}{|h_1(U^{-1/k})|} \leq K |U|^{1/k} |U|^{-(k+2)/k} = K |U|^{-1-1/k}
\]
and
\[
|U|^{1/k}{|h_2(z)|}\leq K |U|^{1/k} |u|^{\beta l}=K |U|^{(1-\beta l)/k}.
\]
 Therefore,  if $R$ is
sufficiently large and $z\in B$ (hence $U\in H_R(\epsilon)$),
we have
\begin{equation}\label{nu}
\2\Phi(U,z) = U+1+\nu(U,z), \quad \hbox{with\ }|\nu(U,z)|<\delta,
\end{equation}
where we have used \eqref{bk}.
In particular, $U_1:=\tilde{\Phi}(U,z)\in H_R(\epsilon)$ in view of \eqref{incl} and
$\Re U_1\geq \Re U+\frac{1}{2}$. Therefore we have proved that
\begin{equation}
z\in B \Rightarrow u_1:=\Phi(u,z)\in S_R(\epsilon).
\end{equation}
Moreover, by the same token, setting by induction
$u_{m+1}:=\Phi(u_{m},F^{\circ m}(z))$, it follows that
\begin{equation}\label{goinf}
\lim_{m\to \infty} u_m=0.
\end{equation}

Now we examine the components  $F_j$ for $j=m+1,\ldots, n$.
Set $x:=(z_1,\ldots, z_m)$ and $y:=(z_{m+1},\ldots, z_n)$.
Then
\[
y_1=My+h(z) z,
\]
where $M$ is the $(n-m)\times (n-m)$ diagonal matrix with
entries $\lambda_j$ ($j=m+1,\ldots, n$) and $h$ is a
holomorphic $(n-m)\times n$ matrix valued function in a
neighborhood of $0$ such that $h(0)=0$. If $z\in B$, then
$|y|<|u|^\beta$. Moreover, since $|\lambda_j|<1$ for
$j=m+1,\ldots, n$, it follows that there exists $a<1$ such that
$|My|<a|y|<a|u|^\beta$. Also, let $0<b<1-a$. Then, for $R$
sufficiently large, it follows that $|h(z)|\leq b/n$ if $z\in
B$. Hence, letting $p=a+b<1$, we obtain
\begin{equation}
\label{p1}
|y_1|\leq |My|+|h(z)| |z|<a|u|^\beta+\frac{b}{n} n|u|^\beta= (a+b)|u|^\beta=p |u|^\beta.
\end{equation}
Now, we claim that for $R$ sufficiently large, it follows that
\begin{equation}\label{plimio}
|u|\leq \frac{1}{p^{1/\beta}}|u_1|,
\end{equation}
where $u_1=\Phi(u,z)$. Indeed, \eqref{plimio} is equivalent to
$|U_1|\leq p^{-k/\beta} |U|$ and hence to
\[
\frac{|U+1+\nu(U,z)|}{|U|}\leq p^{-k/\beta}.
\]
But the limit for $|U|\to \infty$ in the left-hand side is $1$
and the right-hand side is $>1$, thus \eqref{plimio} holds
for $R$ sufficiently large.

Hence, by \eqref{p1} and \eqref{plimio} we obtain
\begin{equation}
\label{p2}
|y_1|\leq  |u_1|^\beta.
\end{equation}

Now we examine the components $F_j$ for $j=1,\ldots, m$. Let
$z\in B$ and, as before, let $U=u^{-k}\in H_R(\epsilon)$. By
\eqref{formF} and by \eqref{Dell} we have
\begin{equation}\label{fj}
F_j(z)=\lambda_j\left(1+\frac{a_j}{\lambda_j} \frac{1}{U}+\mu \a_j\frac{1}{U^2}\right)z_j+R_l(z)
\end{equation}
with $R_l(z)=O(|z|^l)$. If $z\in B$ and $R$ is sufficiently
large, one has
\begin{equation}\label{unog}
|R_l(z)|< |z|^{l-1} < (n|u|^{\beta})^{ (l-1)}.
\end{equation}
From \eqref{fj}, and since $U\in H_R(\eps)$, we now obtain
using \eqref{estimo} and \eqref{unog}:
\[
|F_j(z)|\leq \left(1-\frac{c}{|U|}+n^{l-1}|u|^{\beta (l-2)}\right)|u|^\beta=
\left(1-\frac{c}{|U|}+\frac{n^{l-1}}{|U|^{\beta (l-2)/k}}\right)|u|^\beta.
\]
Then
$\beta(l-2)>k+1-2\beta>k$ by \eqref{bk} and thus $\beta(l-2)/k>1$. Hence, if
$R$ is sufficiently large,
\begin{equation}\label{p'}
p(u):=1-\frac{c}{|U|}+\frac{n^{l-1}}{|U|^{\beta
(l-2)/k}}<1.
\end{equation}
Hence
\begin{equation}\label{f2}
|F_j(z)|\leq p(u) |u|^\beta.
\end{equation}

Now we claim that, setting $u_1:=\Phi(u,z)$, we obtain
\begin{equation}\label{Fp}
|u|\leq \frac{1}{|p(u)|^{1/\beta}}|u_1|.
\end{equation}
Indeed, \eqref{Fp} is equivalent to $|U_{1}|\le
p(u)^{-k/\b}|U|$ and hence, in view of \eqref{nu}, to
\begin{equation}\label{est}
\frac{|U+1+\nu(U,z)|}{|U|}\leq p(u)^{-k/\beta}.
\end{equation}
Note that, since $0<c'<c$ (recall that $c'$ is chosen before
\eqref{servizio}), taking $R$ sufficiently large,
$(1-c'/|U|)^{-k/\beta}\leq p(u)^{-k/\beta}$. Also, by
\eqref{nu} we have $\frac{|U+1+\nu(U,z)|}{|U|}\leq
1+\frac{1+\delta}{|U|}$, hence \eqref{est} holds if we can show
that
\begin{equation}\label{new-deal}
1+\frac{1+\delta}{|U|}\leq \left(1-\frac{c'}{|U|}\right)^{-k/\beta}.
\end{equation}
But
\[
\left(1-\frac{c'}{|U|}\right)^{-k/\beta}=1+\frac{k}{\beta}\frac{c'}{|U|}+o\left(\frac{1}{|U|} \right),
\]
and thus \eqref{new-deal} holds whenever $\delta+1-c'k/\beta<0$
(which is assured by \eqref{servizio}) and $R$ is sufficiently
large. Hence, \eqref{Fp} holds. Putting together \eqref{f2} and
\eqref{Fp} we have for $j=1,\ldots, n$
\begin{equation}\label{Fins}
|F_j(z)|\leq |u_1|^\beta, \quad j=1,\ldots,m.
\end{equation}
Equations \eqref{p2} and \eqref{Fins} imply that
$F(B)\subseteq B$. Moreover, by induction, for all $z\in B$,
denoting by $\rho_j(z):=z_j$ the projection on the $j$-th
component, we have
\[
|\rho_j \circ F^{\circ m}(z)|\leq |u_m|^\beta,
\]
hence $F^{\circ m}(z)\to 0$ as $m\to \infty$ by \eqref{goinf}.
Therefore $B$ is a basin of attraction for $F$ at $0$.

To end the proof, we note that the previous argument can be
repeated by considering in \eqref{choose-sector}  the sectors
$S^j_R( \epsilon)$, $j=1,\ldots, k$,  of the form
\[
S^j_R(\epsilon):=\{u\in \Delta_R: |{\sf Arg} u - \frac{2\pi(j-1)}{k} | <\epsilon\}.
\]
Let $B_j$ be the basin of attraction constructed over
$S^j_R(\epsilon)$, namely
\[
B_{j}:=\{z=(z_1,\ldots, z_n)\in \C^n : |z_j|<|u|^\beta, j=1,\ldots, m, |(z_{m+1},\ldots, z_n)|<|u|^\beta, u:=z^\a\in S^{j}_R( \epsilon)\}.
\]
Then clearly  $B_1, \ldots, B_k$
 are disjoint and the proof is complete.
\end{proof}

\begin{remark}\label{proietto}
Let $F$ be as in Theorem \ref{main} and let $B_1,\ldots,
B_k$ be its basins of attraction at $0$ constructed in the proof. If $S_1, \ldots, S_k$
denote the $k$ petals for the induced germ $u\to u+\Lambda
u^{k+1}+O(|u|^{k+2})$ (see \eqref{phi-u}) then, up to
relabeling, $\pi(B_j)\subset S_j$ for $j=1,\ldots, k$, where
$\pi:\C^n \ni z \mapsto z^\a \in \C$. In particular, if
$\a=(q,0,\ldots, 0)$ for some $q\geq 1$, then each $B_k$ has
$q$ connected components.
\end{remark}

\section{Applications and examples}\label{appl}

\subsection{Semi-attractive case} One-resonant diffeomorphisms
with respect to one eigenvalue (which is necessarily a root of unity) are either formally
linearizable in the associated eigendirection or
non-degenerate and parabolically attracting (see Remark~\ref{radice}).
Thus, in particular, we recover
Hakim's theorem on semi-attractive germs (cfr. \cite[Thm.
1.1]{H} for $q=1$):

\begin{corollary}\label{hakim}
Let $F\in \Dif$. Let $\{\lambda_1,\ldots, \lambda_n\}$ be the
eigenvalues of $dF_0$. Suppose that $\lambda_1^q=1$ for some
$q\in \N\setminus\{0\}$ and $\lambda_1^l\neq 1$ for
$l=1,\ldots, q-1$, and that $|\lambda_j|<1$ for $j=2,\ldots,
m$. In particular, $F$ is one-resonant with respect to
$\{\l_{1}\}$. Let $k$ be the order of $F$ with respect to
$\lambda_1$. Then:
\begin{enumerate}
\item either $k<\infty$ and there exist $k$  basins of attraction for $F$ at $0$,
each having $q$ connected components which are cyclically
permuted by $F$,
  \item or $k=\infty$ and $F$ is formally linearizable
  in the first component. This is the case if and only if
  there exists a holomorphic germ of a non-singular  curve of
  fixed points of $F^{\circ q}$ passing  through $0$.
\end{enumerate}
\end{corollary}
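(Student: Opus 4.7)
The plan is to reduce Corollary~\ref{hakim} to Theorem~\ref{main} together with a classical implicit-function-theorem argument. First, I will verify that $F$ is one-resonant with respect to $\{\lambda_1\}$ with index of resonance $\alpha = (q, 0, \ldots, 0)$. Any resonance $\lambda_1 = \lambda^l$ with $|l| \geq 2$ satisfies $|\lambda_1|^{l_1} \prod_{j \geq 2} |\lambda_j|^{l_j} = 1$; since $|\lambda_1| = 1$ and $|\lambda_j| < 1$ for $j \geq 2$, this forces $l_j = 0$ for $j \geq 2$, and then $\lambda_1^{l_1 - 1} = 1$ with $l_1 \geq 2$ gives $l_1 = 1 + kq$ for some $k \geq 1$, so $l = e_1 + k\alpha$.

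In Case~(1), when $k < \infty$, non-degeneracy holds since $a_1 \neq 0$ by the definition of the order, and Remark~\ref{radice} provides parabolic attraction automatically as $m = 1$ and $|\lambda_1| = 1$. Theorem~\ref{main} then yields $k$ disjoint basins of attraction at $0$. By Remark~\ref{proietto}, each basin $B_j$ equals the preimage under $z \mapsto z_1^q$ of a petal $S_j \subset \C$ not containing $0$; hence each $B_j$ splits into exactly $q$ connected components, one per branch of the $q$-th root. The cyclic permutation follows from $F_1(z) = \lambda_1 z_1 + O(|z|^{kq+1})$: since $\lambda_1$ is a primitive $q$-th root of unity, $F$ shifts the $z_1$-branch by one step and thus acts on the $q$ components of each $B_j$ as a single $q$-cycle.

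In Case~(2), when $k = \infty$, the equality is by definition equivalent to formal linearizability of $F$ in the first component (no non-zero resonant coefficient exists in the formal normal form). It remains to establish the equivalence with the existence of a holomorphic non-singular curve of fixed points of $F^{\circ q}$ through $0$. For $(\Leftarrow)$, given such a curve $\gamma$, the kernel of $dF^{\circ q}_0 - I$ is $\C e_1$ (as $\lambda_j^q \neq 1$ for $j \geq 2$), so $\gamma$ is tangent to the $z_1$-axis. Since $F$ commutes with $F^{\circ q}$, $F$ preserves $\gamma$, and $F|_\gamma$ is a holomorphic germ of $(\C, 0)$ satisfying $(F|_\gamma)^{\circ q} = \mathrm{id}$, hence linearizable by averaging over the finite cyclic group generated by $F|_\gamma$. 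Choosing coordinates adapted to $\gamma$ with $F_1(z_1, 0, \ldots, 0) = \lambda_1 z_1$, all resonant monomials $z_1^{kq+1}$ in $F_1$ have zero coefficient, giving formal linearizability.

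For the converse $(\Rightarrow)$, the equations $F^{\circ q}_j(z) = z_j$ for $j \geq 2$ have Jacobian $\mathrm{diag}(\lambda_2^q - 1, \ldots, \lambda_n^q - 1)$ in the variables $(z_2, \ldots, z_n)$ at $0$, which is invertible, so by the holomorphic implicit function theorem they determine a non-singular holomorphic curve $\Gamma = \{(z_1, \phi_2(z_1), \ldots, \phi_n(z_1))\}$ tangent to $e_1$. The full fixed-point germ of $F^{\circ q}$ is $\Gamma \cap \{g = 0\}$, where $g(z_1) := F^{\circ q}_1(z_1, \phi_2(z_1), \ldots, \phi_n(z_1)) - z_1$ is a single holomorphic function of one variable. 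Formal linearizability in the first component yields, via the formal normalizing map, a formal non-singular fixed-point curve tangent to $e_1$, which must coincide with the formal germ of $\Gamma$ by uniqueness of the implicit-function solution; hence $g$ vanishes to all orders at $0$ and, being holomorphic, $g \equiv 0$. Thus $\Gamma$ is the sought curve. The main subtlety is this last step, matching the formal fixed-point curve provided by the normal form with the concrete holomorphic curve $\Gamma$ produced by the implicit function theorem; the rest reduces straightforwardly to Theorem~\ref{main}.
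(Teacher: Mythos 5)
Your proof follows essentially the same route as the paper's: Case (1) reduces to Theorem~\ref{main}, Remark~\ref{radice} and Remark~\ref{proietto}, with the cyclic-permutation count carried out on the first component; Case (2) combines the Poincar\'e--Dulac normal form, the holomorphic implicit function theorem applied to the equations in $z_2,\ldots,z_n$, and uniqueness of the formal solution (the paper first reduces to $q=1$ via the equality of orders, whereas you work directly with $F^{\circ q}$, which is fine). Two small points to tighten: first, in Case (1) the basins $B_j$ from the proof of Theorem~\ref{main} are \emph{not} equal to the full preimages of the petals under $z\mapsto z_1^q$ (they include further size constraints $|z_i|<|u|^\beta$), though Remark~\ref{proietto} does give the $q$ connected components you need; second, in your $(\Leftarrow)$ direction of Case (2), the inference from ``$F_1(z_1,0,\ldots,0)=\lambda_1 z_1$ in coordinates adapted to $\gamma$'' to ``$F$ is formally linearizable in the first component'' requires an extra observation that the Poincar\'e--Dulac normalization can be carried out preserving the invariance of $\{z'=0\}$ (which holds since the eliminating conjugations $\theta$ all vanish on $\{z'=0\}$ in adapted coordinates), so that no new resonant pure-$z_1$ monomials can appear in the normal form; this is implicit in the paper's ``one direction being clear,'' but if you spell it out you should close the gap.
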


\begin{proof} (1) If $k<\infty$ then $F$ is non-degenerate
with respect to $\{\lambda_1\}$ and with index of resonance
$(q,0,\ldots, 0)$ (see Remark \ref{non-form-lin}). By Remark
\ref{radice}, $F$ is parabolically attracting with respect to
$\{\lambda_1\}$ and hence Theorem \ref{main} applies
yielding $k$ basins of attraction.
Let $B_{1},\ldots,B_{k}$ be the basins of attraction
constructed in course of the proof of Theorem~\ref{main}.
By Remark~\ref{proietto}, each $B_{j}$ has $q$ connected
components.

Fix one such a basin of attraction $B=B_{j}$ and let $D_0,\ldots,
D_{q-1}$ be its connected components. By Remark~\ref{proietto},
 the image of $B$ in $\C$ via the map $\C^n\ni
z\mapsto z_1^q$ belongs to a petal
 $S$ of $u\mapsto u+\Lambda(F)u^{k+1}$. Let  $w\in \C$ be such that $w^q\in S$.
 In view of the construction in the proof of Theorem~\ref{main},
 assuming $w$ being sufficiently small,
 we have
$Q_p:=(\lambda_1^pw, 0,\ldots, 0)\in B$ for $p=0,\ldots, q-1$.
Moreover, the $Q_p$'s belong to different connected components
of $B$. We can assume $Q_p\in D_p$ for $p=0,\ldots, q-1$. Now
$F_1(Q_p)=\lambda_1^{p+1}w+o(|w|)$ and hence $F(Q_p)$ belongs
to $D_{p+1}$ (where $D_{q}=D_0$), proving the statement.

(2) We note that by Definition~\ref{orderF}, the orders of $F$ and of
$F^{\circ q}$ with respect to $\lambda_1$ coincide. Furthermore,
$k=\infty$ if and only if $F$ is formally
linearizable in the first component, and hence, if and only if
$F^{\circ q}$ is formally linearizable in the first component.
Therefore we can assume $q=1$. One direction being clear,
we only show that if $k=\infty$ then $F$ has a holomorphic
non-singular curve of fixed points through $0$.  Write $z=(z,z')\in
\C\times \C^{n-1}$, $\lambda'=(\lambda_2,\ldots,\lambda_n)$ and
\[
F(z,z')=(f(z,z'), \lambda'z'+ g(z,z')) \in \C\times \C^{n-1},
\]
 where $f(z,z')=z+o(|(z,z')|)$ and $g(z,z')=o(|(z,z')|)$. We
look for a curve given by $\psi:\zeta\mapsto (\zeta, v(\zeta))$
where $v:U\to \C^{n-1}$ is a germ at $0$ of holomorphic map defined in
some open set $U\subset \C$ such that $v(0)=0$
and such that $F(\psi(\zeta))=\psi(\zeta)$ for all $\zeta\in
U$. We decouple the latter condition as
\begin{equation}\label{dec1}
f(\zeta, v(\zeta))=\zeta
\end{equation}
\begin{equation}\label{dec2}
\lambda' v(\zeta) + g(\zeta, v(\zeta))=v(\zeta)
\end{equation}
Since $k=+\infty$, the Poincar\'e-Dulac theory yields that $F$
is formally conjugated to a map of the type $\hat{F}(z,z')=(z,
\lambda' z' + h(z,z'))$, where each monomial in the expansion of $h(z,z')$ is
divisible by $z_j$ for some $j=2,\ldots, n$. Clearly $\hat{F}$
has a unique curve of fixed points tangent to $e_1$, namely
$z'=0$. Hence, $F$ has a unique formal solution to \eqref{dec1}
and \eqref{dec2}. It is enough to show that such a solution is
actually holomorphic. To this aim, we let
$G(x,y):=(\lambda'-\id)y+g(x,y)$ with $x\in \C$ and $y\in
\C^{n-1}$. Since the Jacobian matrix $\{\frac{\partial
G_j(x,y)}{\partial y_k}|_{0}\}_{j,k=1,\ldots,
n-1}=\lambda'-\id$ has maximal rank, then by the (holomorphic)
implicit function theorem, there exists a unique function $v(x)$
defined and holomorphic near $x=0$ such that $G(x,v(x))\equiv
0$, and the proof is complete.
\end{proof}

\subsection{Quasi-parabolic germs.}\label{nonsep}
A germ of holomorphic diffeomorphism of $\C^{2}$ at $0$ of the
form $F(z,w)=(z+\ldots, e^{2 \pi i \theta} w+\ldots)$ with $\theta\in\R$ is called
{\sl quasi-parabolic}. In particular, if $\theta\in
\R\setminus{\mathbb Q}$, then $F$ is one-resonant.
We shall restrict to this case here.

Using Poincar\'e-Dulac theory, since all resonances are of the
type $(1,(m,0))$, $(2, (m,1))$, the map $F$ can be formally
conjugated to a map of the form
\begin{equation}\label{form-qp}
\hat{F}(z,w)=(z+\sum_{j=\nu}^\infty a_j z^j, e^{2 \pi i \theta} w+\sum_{j=\mu}^\infty b_j z^j w),
\end{equation}
where we assume that either $a_\nu\neq 0$ or $\nu=\infty$  if
$a_j=0$ for all $j$. Similarly for $b_\mu$.

As it is proved in \cite{B-M}, the number $\nu(F):=\nu$ is a
formal invariant of $F$. Moreover, it is proved that, in case
$\nu<+\infty$, the sign of $\Theta(F):=\nu-\mu-1$ is a formal
invariant. The map $F$ is said {\sl dynamically separating} if
$\nu<+\infty$ and $\Theta(F)\leq 0$.

An argument similar to that of the proof of Corollary
\ref{hakim}.(2) yields:

\begin{proposition}
Let $F$ be a quasi-parabolic germ of diffeomorphism of $\C^2$
at $0$. Then $\nu(F)=+\infty$ if and only if there exists a
  germ of (holomorphic) curve through $0$
  that consists of fixed points of $F$.
\end{proposition}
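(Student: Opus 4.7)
The plan is to follow the strategy of Corollary~\ref{hakim}(2), adapted to the quasi-parabolic setting in $\C^2$.

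For the implication $\nu(F)=+\infty\Rightarrow$ existence of a holomorphic curve of fixed points, I would begin from the formal normal form \eqref{form-qp}. When $\nu(F)=+\infty$, all coefficients $a_j$ vanish, so $\hat F(z,w)=(z,\,e^{2\pi i\theta}w+\sum_{j\ge\mu}b_jz^jw)$ visibly admits $\{w=0\}$ as a curve of fixed points. Conjugating back by a formal diffeomorphism $\Phi$, we obtain a formal fixed-point curve of $F$, which after reparametrization takes the form $\psi(\zeta)=(\zeta,v(\zeta))$ with $v\in\C[[\zeta]]$, $v(0)=0$. Writing $F(z,w)=(f(z,w),\,e^{2\pi i\theta}w+g(z,w))$, the two fixed-point equations read
\begin{equation*}
f(\zeta,v(\zeta))=\zeta,\qquad (e^{2\pi i\theta}-1)v(\zeta)+g(\zeta,v(\zeta))=0.
\end{equation*}
Setting $G(x,y):=(e^{2\pi i\theta}-1)y+g(x,y)$, the partial derivative $\partial_y G(0,0)=e^{2\pi i\theta}-1$ is nonzero since $\theta$ is irrational. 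The holomorphic implicit function theorem then produces a unique convergent $v_h(\zeta)$ with $v_h(0)=0$ and $G(\zeta,v_h(\zeta))\equiv 0$; by formal uniqueness for the same equation, $v_h$ has the same Taylor series as the formal $v$. The identity $f(\zeta,v(\zeta))=\zeta$ therefore upgrades to the convergent identity $f(\zeta,v_h(\zeta))=\zeta$, and $\zeta\mapsto(\zeta,v_h(\zeta))$ is a genuine holomorphic curve of fixed points of $F$.

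For the converse, suppose $F$ admits a holomorphic fixed-point curve through $0$. Its tangent at the origin is an eigenvector of $dF_0$ for the eigenvalue $1$; since $e^{2\pi i\theta}\ne 1$, the $1$-eigenspace is the $z$-axis and the curve admits a parametrization $\psi(t)=(t,v(t))$ with $v(0)=0$. Let $\Phi$ be a formal diffeomorphism conjugating $F$ to $\hat F$. By Remark~\ref{diff}, $d\Phi_0$ is diagonal and invertible, so the formal curve $\tilde\psi:=\Phi\circ\psi$ has first coordinate $\tilde\psi_1(t)\in\C[[t]]$ with nonzero linear term. Since $\tilde\psi$ is fixed by $\hat F$, the first component of the fixed-point identity reduces to $\sum_{j\ge\nu}a_j\tilde\psi_1(t)^j\equiv 0$ in $\C[[t]]$; as $\tilde\psi_1\not\equiv 0$ (so that substitution is injective on power series without constant term), this forces $a_j=0$ for all $j\ge\nu$, i.e.\ $\nu(F)=+\infty$.

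The only real subtlety is the passage from a formal fixed-point curve (visible in the normal form) to a genuinely convergent one. This is handled by applying the implicit function theorem to the transverse equation $G=0$: irrationality of $\theta$ ensures that $\partial_yG(0,0)$ is invertible, and formal uniqueness of the IFT solution then enforces consistency with the tangential equation $f(\zeta,\cdot)=\zeta$ that is already verified at the formal level.
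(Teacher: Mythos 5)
Your proposal is correct and follows essentially the same route the paper indicates (the paper only remarks that "an argument similar to that of Corollary~\ref{hakim}(2)" works, and your write-up is precisely that adaptation, with the eigenvalue $e^{2\pi i\theta}\ne 1$ playing the role that $\lambda'-\id$ invertible plays there). In particular you correctly isolate the transverse equation $G(x,y)=(e^{2\pi i\theta}-1)y+g(x,y)=0$, solve it by the holomorphic implicit function theorem, and use formal uniqueness to transfer the tangential identity $f(\zeta,v(\zeta))=\zeta$ to the convergent solution; and your converse argument (substituting a formal fixed-point curve into $\hat F_1$ and using that composition with a power series of order one is injective) correctly fills in the direction the paper leaves as "clear."
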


In case $\nu(F)<+\infty$, we note that the index $\a=(1,0)$
and therefore $\Lambda(F)$ equals either $a_{\nu(F)}$ or $0$,
depending on whether $\nu\le\mu+1$ or $\nu >\mu+1$.
Hence $F$ is dynamically separating if and only if it is
non-degenerate with respect to $\{1,e^{2\pi i \theta}\}$. In case
$F$ is non-degenerate, $k:=\nu(F)-1$ is the order of $F$ with
respect to $\{1, e^{2\pi i\theta}\}$.

In \cite{B-M} it is proven that if $F$ is a quasi-parabolic
dynamically separating germ of diffeomorphism at $0$ then there
exist $\nu(F)-1$ petals for $F$ at $0$.
A direct computation shows that if $F$ is dynamically separating,
then it is parabolically attracting
if and only if
\begin{equation}\label{cond1}
\Re
\left(\frac{b_{\nu-1}}{e^{2\pi i\theta} a_{\nu}}
\right)>0.
\end{equation}
Then as a consequence of Theorem \ref{main} and Remark~\ref{proietto} we have:

\begin{corollary}
Let $F$ be a dynamically separating quasi-parabolic germ,
formally conjugated to \eqref{form-qp}. If \eqref{cond1} holds,
then there exist $\nu(F)-1$ disjoint connected basins of
attraction for $F$ at $0$.
\end{corollary}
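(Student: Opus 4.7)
The plan is to reduce the statement to a direct application of Theorem~\ref{main} by matching the invariants of the quasi-parabolic germ $F$ to those in the general one-resonant framework; the bulk of the work has already been set up in the preceding discussion, so the proof should be essentially bookkeeping.

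First I would verify the hypotheses of Theorem~\ref{main}. The eigenvalues of $dF_0$ are $\lambda_1=1$ and $\lambda_2=e^{2\pi i\theta}$ with $\theta$ irrational, so the only resonance relation is $\lambda_1^k=1$ ($k\ge 1$), giving index of resonance $\alpha=(1,0)$; hence $F$ is one-resonant with respect to $\{\lambda_1,\lambda_2\}$, as already noted. Dynamical separation means $\nu\le\mu+1$, and by the preceding paragraph this is equivalent to non-degeneracy, with order $k=\nu(F)-1$, $\Lambda(F)=a_\nu\ne 0$, and coefficients in the form \eqref{lam'} given by $a_1=a_\nu$ and $a_2=b_{\nu-1}$ (which is $b_\mu$ when $\mu=\nu-1$, and vanishes when $\mu>\nu-1$).

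Next I would check parabolic attractivity: $\Re(a_j\lambda_j^{-1}\Lambda^{-1})>0$ for $j=1,2$. For $j=1$, this is $\Re(a_\nu\cdot 1\cdot a_\nu^{-1})=1>0$, automatic. For $j=2$, this reduces to $\Re(b_{\nu-1}/(e^{2\pi i\theta}a_\nu))>0$, which is precisely the hypothesis~\eqref{cond1}. Since $n=m=2$, the hypothesis ``$|\lambda_j|<1$ for $j>m$'' of Theorem~\ref{main} is vacuous, so the theorem applies and yields $k=\nu(F)-1$ disjoint basins of attraction at~$0$. Connectedness of each basin follows from Remark~\ref{proietto}: the index of resonance is $\alpha=(1,0)=(q,0)$ with $q=1$, so each basin has exactly one connected component.

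The only delicate point is interpretive: since \eqref{cond1} is a strict inequality, it forces $b_{\nu-1}\ne 0$, which in the presence of dynamical separation $\mu\ge\nu-1$ pins down $\mu=\nu-1$. In this critical case $a_2=b_\mu=b_{\nu-1}$, the translation to the normal form of Theorem~\ref{main} is direct, and no further analytic work is required beyond invoking the main theorem.
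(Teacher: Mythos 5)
Your proof is correct and takes essentially the same route the paper does: the paper's own justification is the single sentence ``as a consequence of Theorem~\ref{main} and Remark~\ref{proietto},'' relying on the identifications ($\alpha=(1,0)$, $k=\nu(F)-1$, $\Lambda(F)=a_\nu$, parabolic attractivity $\Leftrightarrow$ \eqref{cond1}) that are established in the surrounding text. You have simply spelled those identifications out explicitly, including the observation that $\Re(a_1\lambda_1^{-1}\Lambda^{-1})=1$ is automatic and that Remark~\ref{proietto} with $q=1$ gives connectedness; this matches the intended argument.
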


\subsection{An example of an elliptic germ with parabolic dynamics}\label{mixed}
Let $\lambda=e^{2\pi i \theta}$ for some $\theta\in
\R\setminus{\mathbb Q}$. Let
$$F(z,w)=(\lambda z + az^2w+\ldots,
\lambda^{-1}w +bzw^2+\ldots),$$
 with $|a|=|b|=1$. Then $F$ is one-resonant
with index of resonance $(1,1)$
and for
each choice  of $(a,b)$ such that the germ  is non-degenerate (i.e.\ $a\l^{-1}+b\l\ne0$),
 there exists a basin of
attraction for $F$ at $0$.
Indeed, it can be checked that the non-degeneracy condition implies
that $F$ is parabolically attracting with respect to $\{\l,\l^{-1}\}$
and hence Theorem~\ref{main} applies.

A similar argument can be applied to
$F^{-1}$, producing a basin of repulsion for $F$ at $0$.
Hence we have a parabolic type dynamics for $F$.

On the other hand, suppose further that there exist $c>0$ and
$N\in \N$ such that $|e^{2\pi q i \theta}-1|\geq c q^{-N}$ for
all $q\in \N\setminus\{0\}$ (such a condition holds for
$\theta$ in a full measure subset of the unit circle). Since
$\lambda^q\neq \lambda$ for all $q\in \N$, it follows from
\cite[Theorem 1]{Po} that there exist two analytic discs
through $0$, tangent at the origin to the $z$-axis and to the
$w$-axis respectively, which are $F$-invariant and such that
the restriction of $F$ on each such a disc is conjugated to
$\zeta\mapsto \lambda \zeta$ or $\zeta\mapsto \lambda^{-1}
\zeta$ respectively. Thus, in such a case, the elliptic and
parabolic dynamics mix, although the spectrum of $dF_0$ is only
of elliptic type.

\subsection{Examples of one-resonant degenerate germs with no basins of attraction}\label{nobase}
Set
\begin{equation}\label{esempio}
F(z,w)=\left(\lambda z\left(1-\frac{zw}\l\right)^{{-1}}, \frac{w}{\lambda}\left( 1 -
\frac{zw}{\lambda}\right)\right),
\end{equation}
with $|\lambda|=1$ and $\lambda$ not a root of unity.
Then $F$ is
one-resonant with index of resonance $\a=(1,1)$ but it is
degenerate because
\[
\Lambda(F)=\frac{1}{\lambda}-\frac{1}{\lambda^2}\cdot \lambda=0.
\]
Note also that the order of $F$ is $k=1$. Set $u=zw$ and
\[
\Phi(u)=F_1(z,w)\cdot F_2(z,w)=u.
\]
We claim that $F$ has no basins of attraction at $0$.
Indeed, suppose $F^{\circ n}(z,w)\to 0$ as $n\to\infty$ for some $(z,w)$.
Then it follows that $\Phi^{\circ n}(zw)\to 0$ as $n\to\infty$,
which implies that $zw=0$. The latter cannot hold on a nonempty open set.

A less trivial example demonstrating this phenomenon is the following.
Set
\begin{equation}\label{esempio1}
F(z,w)=\left(\lambda z + z^2w, \frac{1}{\lambda}w -
\frac{1}{\lambda^2}zw^2\right),
\end{equation}
where $|\lambda|=1$ and $\lambda$ is not a root of unity.
As before, $F$ is one-resonant with index of resonance  $(1,1)$
and $\L(F)=0$. The order of $F$ is $1$ and
for $u=zw$ we obtain
\[
\Phi(u)=F_1(z,w)\cdot F_2(z,w)=u-\frac{1}{\lambda^2} u^3.
\]
The order of $\Phi$ at $u=0$ is $2$. Now the attracting directions
of $\Phi$ at $0$ are $v=\pm \lambda$.
The map $\Phi$ is a polynomial, with  two attracting {\em maximal} petals
$P(\lambda)$ and $P(-\lambda)$ at the origin. The maximal petals
$P(\lambda)$ and $P(-\lambda)$ are disjoint and obtained
as unions of all preimages under $F^{{\circ n}}$, $n=1,2,\ldots$ of
two fixed local petals.

Let $J$ be the Julia set of $\Phi$.
Set $\3J:=\{(z,w): zw\in J\}$. Then $\3J$ has empty interior since $J$ does (see e.g.\ \cite{CaGa}).
We claim that if $(z,w)\notin \3J$, then $F^{\circ n}(z,w)\not\to 0$ as $n\to \infty$.

It is well-known
that, if $u_{0}\notin J$ and $\Phi^{\circ n}(u_{0})\to 0$ as $n\to\infty$,
then $u_{0}\in P(\l)\cup P(-\l)$. Therefore if $(z,w)\not\in\3J$ is such that $zw\not\in P(\l)\cup P(-\l)$ then $\{\Phi^{\circ n}(zw)\}$ cannot converge to $0$ and therefore $F^{\circ n}(z,w)\not\to 0$.

Now, let $(z,w)\not\in\3J$ be such that $zw\in P(\l)\cup P(-\l)$. Then, setting $u_{l}=\Phi^{\circ l}(zw)$ we have
\[
F^{\circ m}(z,w)=\left(\lambda^m z \prod_{l=1}^m \left(1+\frac{u_l}{\lambda} \right), \frac{w}{\l^m}
\prod_{l=1}^m \left(1-\frac{u_l}{\lambda} \right)\right),
\]
so that the behavior of $F^{\circ m}(z,w)$ depends only on the
behavior of the infinite products $\prod_{l=1}^m \left(
1\pm\frac{u_l}{\l}\right)$.

The sequence $\{u_l\}$ tends
to $0$ with speed
\begin{equation}\label{via}
|u_l|^2\sim \frac{1}{l},
\end{equation}
while $u_l/|u_l|\to \pm \l$ depending on whether $zw\in P({\pm \l})$.
But
\[
\left|1\pm \frac{u_l}{\l} \right|= \sqrt{1+|u_l|^2\pm 2|u_l|\Re \frac{u_l}{\l |u_l|}}\sim 1\pm \frac{1}{\sqrt{l}}\Re \frac{v}{\l},
\]
where $v=\pm \l$. Therefore, if $zw\in P(\l)$, i.e. $v=\l$,
then the behavior of $p_1 \circ F^{\circ m}(z,w)$ (here
$p_1(z,w)=z$) depends on the infinite product
\[
\prod \left(1+\frac{1}{\sqrt{l}}\right),
\]
which diverges to $\infty$, while the behavior of $p_2 \circ
F^{\circ m}(z,w)$  (here
$p_2(z,w)=w$)  depends on the infinite product
\[
\prod \left(1-\frac{1}{\sqrt{l}}\right),
\]
which converges to $0$. If $zw\in P(-\l)$ the situation is reversed. In both cases  $F^{\circ n}(z,w)\not\to 0$. Hence $F$ has no basin of attraction at $0$.

\subsection{Example of a one-resonant non-degenerate (but not parabolically attracting) germ
with no basins of attraction}\label{non-attracting}

Consider the germ given by
$$
F(z,w)=(
z-z^{2}, \l w + \l zw
),
$$
where $|\l|=1$ and $\lambda$  is not a root of unity.
Then $F$ is one-resonant with index of resonance $(1,0)$.
Furthermore $\L=-1$, hence $F$ is non-degenerate.
On the other hand, $F$ is not parabolically attracting,
in fact $\Re(a_{2}\l_{2}^{-1}\L^{-1})=-1<0$.
Thus Theorem~\ref{main} does not apply and, in fact, $F$ has no basin of attraction.

Indeed, if $F^{\circ n}(z_{0},w_{0})\to0$ as $n\to\infty$, then $z_{0}$ must belong to the maximal petal
of the map $\phi(z)=z-z^{2}$. Setting $z_{n}:=\phi^{\circ n}(z_{0})$, we have
$$
F^{\circ n}(z_{0},w_{0}) =
\left(
z_{n},
\l^{n} w  \prod_{l=1}^{n} \left(1+z_{l}\right).
\right)
$$
In view of Theorem~\ref{LF},
$$\left|\prod_{l=1}^{n} \left(1+z_{l}\right)\right| \ge \prod_{l=1}^{n} \left(1+\frac{\eps}{l}\right)=+\infty$$
for suitable $\eps>0$. Hence the only possibility for $F^{\circ n}(z_{0},w_{0})\to0$
is when $w_{0}=0$.  Thus we cannot have a (open) basin of attraction.

\bibliographystyle{alpha}

\end{document}